\numberwithin{equation}{section}
\renewcommand{\(}{\left(}
\renewcommand{\)}{\right)}
\newtheorem{theorem}{Theorem}[section] 
\newtheorem{lemma}[theorem]{Lemma}     
\newtheorem{corollary}[theorem]{Corollary}
\newtheorem{remark}[theorem]{Remark}
\begin{document}
	
	\title[Gradient estimates for an elliptic equation]{Gradient estimates for nonlinear elliptic equations with a gradient-dependent nonlinearity}
	
		\author{Joshua Ching and Florica C. C\^irstea}

	\address{School of Mathematics and Statistics, The University of Sydney, NSW 2006, Australia} 
	\email{joshua.ching@sydney.edu.au}
	\email{florica.cirstea@sydney.edu.au}

\let\thefootnote\relax\footnote{F.C. C\^{\i}rstea was partially 
		supported by ARC Discovery grant number DP120102878 ``Analysis of non-linear partial differential equations
		describing singular phenomena".}

\begin{abstract} In this paper, we obtain gradient estimates of the positive solutions to weighted $p$-Laplacian type equations with a 
	gradient-dependent nonlinearity of the form \begin{equation} \label{unu}
	{\rm div} \(|x|^{\sigma}|\nabla u|^{p-2} \nabla u\)= |x|^{-\tau} u^q |\nabla u|^m \quad \mathrm{in } \ \Omega^*:= \Omega \setminus \{ 0 \}. 
	\end{equation}
	Here, $\Omega\subseteq \mathbb R^N$ denotes a domain containing the origin with $N\geq 2$, whereas $m,q\in [0,\infty)$, $1<p\leq N+\sigma$ and 
	$q>\max\{p-m-1,\sigma+\tau-1\}$. The main difficulty arises from the dependence of the right-hand side of \eqref{unu} on  $x$, $u$ and $|\nabla u|$, without any upper bound restriction on the power $m$ of $|\nabla u|$. Our proof of the gradient estimates is based on a two-step process relying on a modified version of the Bernstein's method. As a by-product, we extend the range of applicability of the Liouville-type results known for \eqref{unu}.  
\end{abstract}

\maketitle
\section{Introduction and main result}

{\em A priori} estimates for second-order, nonlinear elliptic and parabolic equations are of fundamental importance in geometry and partial differential equations. Independent of any knowledge of the existence of solutions, {\em a priori} estimates   
play a crucial role in establishing existence, uniqueness, regularity and other qualitative properties of solutions. For example, a key step for proving the 
existence of solutions for quasilinear elliptic equations is represented by local or global gradient bounds. 
Such {\em a priori} estimates lead to Harnack inequalities, Liouville theorems and compactness theorems for linear and nonlinear partial differential equations. 

In this paper we derive gradient bounds for the positive solutions to a class of elliptic equations in divergence form such as
\begin{equation} \label{le}
{\rm div} (|x|^{\sigma}|\nabla u|^{p-2} \nabla u)= |x|^{-\tau} u^q |\nabla u|^m \quad \mathrm{in } \ \Omega^*:= \Omega \setminus \{ 0 \}.  
\end{equation}
Here, $\Omega\subseteq  \mathbb{R}^N$ denotes a domain containing the origin with $N\geq 2$, while $m,p,q,\sigma$ and $ \tau$ are real parameters.  
We define $k$ and $\ell$ by
\begin{equation} \label{kl}
k:=m+q-p+1\ \mathrm{and}\  \ell:=q+1-\sigma-\tau. 
\end{equation}
We assume throughout the following condition 
\begin{equation} \label{GG}
1<p\leq  N+\sigma,  \quad \min\{k,\ell\}>0 \quad \text{and}\quad 
m,q\in [0,\infty). 
\end{equation}

By a positive solution $u$ of (\ref{le}) we mean a positive function $u\in C^2(\Omega^*)$ satisfying (\ref{le}) in the classical sense. 
By the strong maximum principle (see Lemma~\ref{smp11}), any non-negative and non-zero solution of (\ref{le}) is positive in $\Omega^*$.

The main result of this paper is the following theorem.

\begin{theorem} \label{thm1} 
	Let {\rm(\ref{GG})} hold. 
	There exists a positive constant $C=C(m,N,p,q,\sigma, \tau)$ such that for any positive solution $u$ of \eqref{le} and any $r_0>0$ with 
	$\overline{B_{2 r_0}(0)} \subset \Omega$, it holds
	\begin{equation} \label{gu2a}
	|\nabla u (x)| \leq C |x|^{-\frac{\ell}{k}} \quad \mathrm{for \ every }\ 0<|x| \leq r_0. 
	\end{equation}
	In particular, if $\Omega = \mathbb{R}^N$, then {\rm(\ref{gu2a})} holds for all $x \in \mathbb{R}^N \setminus \{ 0 \}$.
\end{theorem}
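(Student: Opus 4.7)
The plan is to follow the modified Bernstein method suggested in the abstract, guided by the scaling invariance of \eqref{le}. A direct computation shows that if $u$ solves \eqref{le}, then so does $u_{\lambda}(x) := \lambda^{(\ell - k)/k} u(\lambda x)$ on $\lambda^{-1} \Omega^{*}$; the estimate \eqref{gu2a} is precisely the pointwise bound compatible with this symmetry. For any $x_{0} \in \Omega^{*}$ with $r := |x_{0}| \leq r_{0}$, setting $v(y) := r^{(\ell - k)/k} u(r y)$ on the fixed annulus $A := \{ 1/2 < |y| < 2 \}$, the problem reduces to proving a uniform interior Lipschitz estimate $|\nabla v(y_{0})| \leq C$ for $|y_{0}| = 1$, with $C$ independent of $r$.

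On this rescaled problem I would apply Bernstein's method in two stages. The core computation is by now standard: differentiating \eqref{le} once and contracting with $\nabla v$ produces a degenerate elliptic equation for $|\nabla v|^{2}$ whose principal part is the linearization of the weighted $p$-Laplace operator, and whose positive Bochner-type contribution of order $|\nabla v|^{p-4} |\mathrm{Hess}\, v|^{2}$ must absorb the bad terms coming from the cutoff, the weights $|y|^{\sigma}, |y|^{-\tau}$, and, most importantly, the factor $|\nabla v|^{m-1}$ generated by differentiating the right-hand side. Because no upper bound is assumed on $m$, a single direct application of Bernstein to an auxiliary function of the form $\eta^{a} |\nabla v|^{b}$ cannot close the estimate. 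The two-step remedy I anticipate is: first apply Bernstein to a mixed auxiliary function $w_{1} := \eta^{a_{1}} v^{s} |\nabla v|^{b_{1}}$ (with $\eta$ a smooth cutoff on $A$, $v$ positive by Lemma~\ref{smp11}, and exponents chosen using $k, \ell > 0$) to derive a preliminary pointwise estimate trading $|\nabla v|$ against $v$, of the form $|\nabla v| \leq C(v^{\gamma} + 1)$; then substitute this back into the right-hand side of \eqref{le} to reduce the effective power of $|\nabla v|$, and perform a second Bernstein argument with a pure-gradient auxiliary $w_{2} := \eta^{a_{2}} |\nabla v|^{b_{2}}$ that no longer feels the full $|\nabla v|^{m}$ growth.

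The main obstacle is precisely the absence of any upper bound on $m$. In a one-step Bernstein argument for $\Delta_{p} u = |\nabla u|^{m}$ one needs roughly $m \leq p$ to absorb the first-order term $|\nabla v|^{m-1}$ into the Bochner contribution; here, the nonlinearity $v^{q}$ in \eqref{le} must be exploited, and the two-step procedure is the device by which $v^{q}$ is converted, via the preliminary bound, into additional negative powers of $|\nabla v|$ that make the final absorption succeed. The sign conditions $k > 0$ and $\ell > 0$ in \eqref{GG} should turn out to be exactly what is required for the exponents $s, b_{1}, b_{2}, \gamma$ to admit an admissible choice, while the weights $|x|^{\sigma}, |x|^{-\tau}$ are harmless after the rescaling, since on $A$ they are bounded above and below by positive constants.
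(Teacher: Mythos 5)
Your outline is correct and is essentially the paper's two-step modified Bernstein argument, repackaged through a scaling reduction. The scaling observation is valid: a direct computation gives $(\ell-k)/k=(p-m-\sigma-\tau)/k$, so $u_\lambda:=\lambda^{(\ell-k)/k}u(\lambda\,\cdot)$ solves \eqref{le} whenever $u$ does, and \eqref{gu2a} is exactly the scale-invariant pointwise bound. The paper does not rescale explicitly; instead, the same homogeneity is encoded in the choice $w_1=\phi^{1/k}|\nabla u_1|^2$ (rather than $\phi|\nabla u_1|^2$), the exponent $1/k$ being precisely what makes the cutoff $\phi$ factor out of the final inequality \eqref{j23} after the intermediate estimate has been substituted. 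Your remark that the weights $|y|^{\sigma},|y|^{-\tau}$ become bounded on the annulus is correct but a minor convenience; the real difficulty, which you rightly flag, is that the rescaled $v$ remains arbitrary, so the estimate must be independent of $\sup v$. For the first Bernstein step, your proposed mixed auxiliary $\eta^{a_1}v^{s}|\nabla v|^{b_1}$ with $s=-b_1$ coincides with the paper's log transform $u_2=-\log(C_0 u_1)$ followed by Bernstein applied to $\phi|\nabla u_2|^2$ (since $v^{-b}|\nabla v|^{b}=|\nabla \log v|^{b}$). The log transform is the cleaner realisation because $u_1^q$ becomes a constant multiple of $e^{-k u_2}$, whose derivative in $u_2$ is a negative multiple of itself; this sign is what makes the term $\mathcal{W}_j$ nonnegative in Remark~\ref{remark2}. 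The intermediate estimate that actually results (Lemma~\ref{lm1}) is $|\nabla u_1|\leq C_1 u_1/|x|$, so in your notation $\gamma=1$ is forced by exponent bookkeeping and the extra ``$+1$'' is superfluous. The one genuine ingredient you do not anticipate is that the linearised operator $\mathcal{L}_j$ of \eqref{L1} carries an additional term $-M\langle\nabla z_j,\nabla w\rangle/z_j$ with $M$ large, a device taken from Bidaut-V\'eron, Garcia-Huidobro and V\'eron, which is essential for the absorptions in Lemma~\ref{lm13}; without it the Bernstein computation for the weighted $p$-Laplacian does not close.
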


Theorem~\ref{thm1} is also applicable if $m=0$. In this instance and other  
particular cases of \eqref{GG}, by assuming an upper bound on $m$, gradient estimates can be obtained by deriving {\em a priori} estimates of the solutions, then using a suitable transformation and \cite{Tolksdorf}*{Theorem 1}, see \cite{FriedmanVeron}*{Lemma 1.1} for
$m=\sigma=\tau=0$ and \cite{CC}*{Lemma 3.8} for $\sigma=\tau=0<m<2=p$. 
We generalise such estimates to (\ref{le}) in Theorem~\ref{thm1}
through a different method (akin to that in \cites{Lions,Nguyen}) without any upper bound restriction on the power $m$ of $|\nabla u|$ in the right-hand side of (\ref{le}).

An important tool for obtaining gradient bounds is 
the classical Bernstein's method, introduced by Bernstein (\cite{Be1}--\cite{Be3}) at the beginning of the 20th century. The basic idea is to derive a differential equation for $|\nabla u|^2$ and then apply the maximum principle. 
Bernstein's method was substantially developed by Ladyzhenskaya \cite{LA} and 
Ladyzhenskaya and Ural'tseva \cites{LU1,LU2} 
(to obtain both interior and global gradient estimates for uniformly elliptic equations)
and later applied systematically to quasilinear elliptic equations by Serrin \cite{Se}, Lions \cite{Lions} and many others, leading to a definitive quasilinear theory as described by Gilbarg and Trudinger \cite{GT}. A weak Bernstein method was introduced by Barles \cite{Ba} for fully nonlinear elliptic equations based on the approach of viscosity solutions. 

We now return to equation (\ref{le}) for a brief review of gradient estimates. 
{\em Without the factor $u^q$} in (\ref{le}), by relying on the Bernstein technique, 
{\em a priori} gradient bounds were first derived by Lions \cite{Lions}*{Theorem IV.1} for $\Delta u=|\nabla u|^m $  and 
recently extended by Nguyen \cite{Nguyen}*{Lemma 2.2} to include equations such as $\Delta u=|x|^{-\tau}|\nabla u|^m$ in $\Omega^*$ when $m>1>\tau$. {\em A priori} universal gradient estimates for the quasilinear elliptic equation $ {\rm div} (|\nabla u|^{p-2} \nabla u)= |\nabla u|^m$ 
with $m>p-1>0$  on a domain $\Omega$ of $\mathbb R^N$ ($N\geq p$) have been obtained by Bidaut-V\'eron et al. \cite{BVGH}*{Proposition~2.1}. They extended their estimates to equations on complete non-compact manifolds satisfying a lower bound estimate on the Ricci curvature and used them to derive Liouville type theorems. 

We aim to generalise the gradient estimates in \cite{Nguyen}*{Lemma~2.2} and \cite{BVGH} to the {\em weighted} $p$-Laplacian type equation 
\eqref{le} in the corresponding framework of \eqref{GG}. New difficulties arise due to the introduction of a non-negative power $u^q$ and of a weight function $|x|^{-\tau}$ in the right-hand side of (\ref{le}). We next outline the main steps in the derivation of the gradient bounds of Theorem~\ref{thm1} for any positive solution $u_1$ of \eqref{le}. 
Fix $x_0\in B_{r_0}(0)\setminus\{0\} $ such that 
$|\nabla u_1(x_0)|>0$. 
Let $\mathcal G$ denote the maximal connected component of $\{x\in \Omega^*:\  |\nabla u_1(x)|>0\}$ containing $x_0$.
We set $\rho_0:=|x_0|$ and $a_{1,1}(x):=|x|^{-\tau -\sigma}  u_1^q(x) |\nabla u_1(x)|^{m+2-p}$ for $x\in \mathcal G\cap B_{\rho_0/2}(x_0)$ so that 
\begin{equation} \label{gs} \Delta u_1=a_{1,1}(x) -\sigma\frac{\langle x,\nabla u_1\rangle}{|x|^2}-\left(p-2\right)  \frac{\langle 
	(D^2 u_1)(\nabla u_1),\nabla u_1\rangle}{|\nabla u_1|^2}\quad \text{on }\  \mathcal G\cap B_{\rho_0/2}(x_0).
\end{equation} Note that the power factor $u_1^q$ is hidden into $a_{1,1}$. 
Let $\phi$ and $\omega$ be given by \eqref{pf1} and \eqref{omegadef}, respectively. 
By ``linearising" \eqref{gs}, we need to prescribe a suitable linear operator $\mathcal L_1 [w]$ for $w \in C^2(\mathcal G \cap B_{\rho_0/2}(x_0))$ and be able to bound $\mathcal L_1 [w_1]$ from above on $\omega$ for $$w_1 = \phi^{2 \alpha_1}|\nabla u_1|^2,$$ where $\alpha_1$ is a positive constant to be conveniently chosen as $1/(2k)$. By the definition of $\phi$ and $\omega$, we have that $\max_{\overline \omega} w_1=w_1(x^*)$ for some 
$x^*\in \omega$. Since $\nabla w_1(x^*)=0$ and $(D^2 w_1)(x^*)$ is negative semi-definite, the definition of $\mathcal L_1 [w_1]$ at $x^*$ will lead to 
$\mathcal L_1 [w_1](x^*)\geq 0$ (see (\ref{max11})). 

The construction of $\mathcal L_1$ is a critical step, which becomes significantly more difficult than for the Laplacian type equations treated in \cites{Lions,Nguyen}. For our more intricate weighted $p$-Laplacian type equation (\ref{le}), the nonlinearity depends on the unknown $u$ and its gradient, and also on the space variable $x$. By denoting $z_1(x)=|\nabla u_1(x)|^2$ for every $x\in \mathcal G\cap B_{\rho_0/2}(x_0)$ and 
$$ \mathcal A_1[w]:= -\Delta w- \sigma\frac{\langle x,\nabla w\rangle}{|x|^2}-\left(p-2\right)  \frac{\langle 
	(D^2 w)(\nabla u_1),\nabla u_1\rangle}{|\nabla u_1|^2}, 
$$
our operator $\mathcal L_1[w]$ is defined by 
\begin{equation} \label{l1def} \mathcal L_1[w]:=\mathcal A_1[w]+\left(m+2-p\right) a_{1,1}\frac{\langle \nabla u_1,\nabla w\rangle}{z_1}-M\frac{\langle \nabla z_1,\nabla w\rangle}{z_1}.\end{equation}
Here, $M$ is a sufficiently large constant (as in Lemma~\ref{lm11}). 
Compared with  \cites{Lions,Nguyen} (where $p=2$ and $\sigma=q=0$), our operator $\mathcal L_1$ in (\ref{l1def}) introduces the extra term 
$-M\frac{\langle \nabla z_1,\nabla w\rangle}{z_1}$, a trick inspired by the work of Bidaut-V\'eron et al. \cite{BVGH}*{Proposition~2.1}. 
We mention that Bernstein's method is adapted differently in \cite{BVGH}  than in this paper.  
In Lemma~\ref{lm11}, we  
bound $\mathcal L_1[w_1]$ from above to get positive constants $d_i(m,N,p,q,\sigma,\tau)$ for $i=1,2,3$ such that
\begin{equation} \label{haha} \mathcal L_1[w_1]\leq \left[d_1\frac{z_1}{\phi |x|^2}-d_2 \left(a_{1,1}(x)\right)^2-d_3\frac{|\nabla z_1|^2}{z_1}\right]\phi^{2\alpha_1}\leq d_1 \phi^{\frac{1}{k}-1}\frac{z_1}{|x|^2}-d_2 u_1^{2q} \phi^{\frac{1}{k}}\frac{z_1^{m+2-p}}{|x|^{2(\tau+\sigma)}}\end{equation} 
for all $ x\in \omega$ since 
$\alpha_1=1/(2k)$. The right-hand side of \eqref{haha} shows that we cannot proceed further without an intermediate estimate that relates the gradient of the solution to the solution itself. This is done in Lemma~\ref{lm1} by an appropriate log transform of $u_1$ in \eqref{defu2} to obtain a new function $u_2$ satisfying (\ref{vee}). Up to a constant, this transformation combines the powers of $u_1$ with powers of $|\nabla u_1|$  (see the definition of $h_2$ in \eqref{hj}) into the exponential term $e^{-k u_2}$.  As a result, we can similarly modify the Bernstein method as for $u_1$ to obtain a gradient estimate for $u_2$ using $\mathcal L_2$ in \eqref{L1} applied to $w_2$ given by \eqref{wj12}. Hence, we can derive the intermediate estimate \eqref{gu1a} in Lemma~\ref{lm1}, 
which employed in \eqref{haha} leads to \eqref{j23}. Since $\mathcal L_1 [w_1](x^*)\geq 0$, by letting $x=x^*$ in \eqref{j23}, we finally reach 
the main estimate in \eqref{gu2a} for the solution $u_1$ of \eqref{le}.

\vspace{0.2cm}
{\bf Structure of the paper.} In Section~\ref{sec2}, we provide the main ingredients in the proof of Theorem~\ref{thm1}.  We postpone the technicalities of the proof to Section~\ref{app1}. 
By applying Theorem~\ref{thm1}, we obtain (i) a Liouville-type result in Corollary~\ref{guec1}, which improves through a different method the corresponding results in Farina and Serrin \cite{FarSer}*{Theorems 2,3} and (ii) {\em a priori} estimates of the positive solutions of \eqref{le} in Corollary~\ref{ch4ap1}.

\section{Proof of the main result} \label{sec2}
 
As explained in the introduction, a crucial step in establishing \eqref{gu2a} is the estimate in \eqref{gu1a} relating the gradient of an arbitrary positive solution $u_1$ of \eqref{le} to the solution $u_1$ itself. 

\vspace{0.2cm}
\begin{lemma} \label{lm1}
	Let  \eqref{GG} hold. There exists a positive constant $C_1=C_1(m,N,p,q,\sigma,\tau)$ such that for every positive solution $u_1$ of \eqref{le} and any $r_0>0$ with $\overline{B_{2 r_0}(0)} \subset \Omega$, we have that
	\begin{equation} \label{gu1a}
	|\nabla u_1 (x)| \leq C_1 \frac{u_1(x)}{|x|} \quad \mathrm{for \ all}\  0<|x|\leq r_0.
	\end{equation}
\end{lemma}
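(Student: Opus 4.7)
The plan is to convert the mixed estimate \eqref{gu1a} into a pure gradient bound by a logarithmic substitution, and then rerun on the transformed function the modified Bernstein argument already sketched for $u_1$. I set $u_2 := -\log u_1$ on $\Omega^\ast$ (up to an additive constant, this is the log transform alluded to in the introduction). Since $\nabla u_2 = -\nabla u_1/u_1$, the target \eqref{gu1a} is equivalent to the cleaner pure gradient bound $|\nabla u_2(x)| \leq C_1/|x|$ for $0 < |x| \leq r_0$. A straightforward chain-rule expansion of $\mathrm{div}(|x|^{\sigma}|\nabla u_1|^{p-2}\nabla u_1)$, together with $k = m+q-p+1$ from \eqref{kl}, shows that $u_2$ satisfies an equation of the form
\begin{equation*}
\mathrm{div}(|x|^{\sigma}|\nabla u_2|^{p-2}\nabla u_2) = (p-1)|x|^{\sigma}|\nabla u_2|^{p} - |x|^{-\tau}\, e^{-k u_2}\,|\nabla u_2|^{m} \quad \text{on } \Omega^\ast.
\end{equation*}
The key point is that the joint $(u_1,|\nabla u_1|)$-dependence of the original nonlinearity has been consolidated into the single exponential factor $e^{-k u_2}$; since $k>0$ by \eqref{GG}, this factor carries the favourable sign needed for absorption.

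The second step is to run on $u_2$ the modified Bernstein machinery already laid out for $u_1$. Fix $x_0 \in B_{r_0}(0)\setminus\{0\}$ with $|\nabla u_2(x_0)| > 0$, let $\mathcal{G}_2$ denote the connected component of $\{|\nabla u_2|>0\}$ through $x_0$, and set $z_2 := |\nabla u_2|^{2}$; I reuse the same cutoff $\phi$ and neighbourhood $\omega$ used in the $u_1$-argument. Dividing the equation for $u_2$ by $|\nabla u_2|^{p-2}$ brings it into the form \eqref{gs} with a new coefficient $a_{2}(x) = (p-1)|\nabla u_2|^{2} - |x|^{-\tau-\sigma}\,e^{-k u_2}\,|\nabla u_2|^{m+2-p}$ playing the role of $a_{1,1}$. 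I then define $\mathcal{L}_2[w]$ by the prescription \eqref{l1def} with $u_1, a_{1,1}, z_1$ replaced throughout by $u_2, a_{2}, z_2$ (the large constant $M$ kept in place), and apply it to $w_2 := \phi^{2\alpha_2}z_2$, for an exponent $\alpha_2$ dictated by the target scaling $|\nabla u_2|\lesssim|x|^{-1}$ — the analogue of the choice $\alpha_1=1/(2k)$ for $u_1$. Since $w_2$ attains its maximum on $\overline{\omega}$ at some interior point $x^{\ast}$, the usual first- and second-derivative conditions yield $\mathcal{L}_2[w_2](x^{\ast}) \geq 0$. A Bochner-type computation mirroring \eqref{haha} then produces an upper bound of the schematic form
\begin{equation*}
\mathcal{L}_2[w_2] \leq \bigl[D_1\, z_2/(\phi|x|^{2}) - D_2\, a_{2}^{2}\bigr]\phi^{2\alpha_2} \quad \text{on } \omega,
\end{equation*}
with positive $D_i = D_i(m,N,p,q,\sigma,\tau)$. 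Evaluating at $x^{\ast}$ and exploiting the favourable sign of the exponential absorption term forces $z_2(x^{\ast}) \lesssim |x^{\ast}|^{-2}$; unwrapping the definition of $\phi$ propagates this to $|\nabla u_2(x)| \leq C_1/|x|$ on $\omega$, in particular at $x_0$. Translating back through $|\nabla u_1| = u_1\,|\nabla u_2|$ gives \eqref{gu1a}; points at which $|\nabla u_1|$ vanishes are handled trivially.

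The main obstacle will be the Bochner-type upper bound for $\mathcal{L}_2[w_2]$. The factor $e^{-k u_2}$ is unbounded from above — it corresponds to $u_1^{k}$, which blows up when $u_1\to\infty$ — and its gradient $-k e^{-k u_2}\nabla u_2$ feeds a large first-order cross term into the computation of $\mathcal{L}_2[w_2]$. Absorbing this is exactly the purpose of the extra $-M\langle\nabla z_2,\nabla w\rangle/z_2$ piece built into $\mathcal{L}_2$ via \eqref{l1def}: for $M$ large enough in terms of $(m,N,p,q,\sigma,\tau)$ only, that term should swallow the unwanted cross term into the good negative contribution $-D_2 a_{2}^{2}$ coming from the absorption. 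Checking that such $M$ exists with no dependence on the solution, while simultaneously keeping track of the weighted $p$-Laplacian structure, the competing singular weights $|x|^{\sigma}$ and $|x|^{-\tau}$, and the Hessian terms produced by the Bochner identity, is the delicate book-keeping step of the proof — and it is precisely here that the hypothesis $k > 0$ from \eqref{GG} is used in an essential way.
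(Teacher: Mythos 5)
Your high-level strategy is the same as the paper's: substitute $u_2=-\log u_1$ (the paper uses $-\log(C_0u_1)$, but the additive constant only rescales $e^{-ku_2}$ and is inessential), observe that \eqref{gu1a} is equivalent to $|\nabla u_2|\lesssim |x|^{-1}$, write down the transformed equation — which you compute correctly — and rerun a modified Bernstein argument. However, there is a genuine gap in how you close the Bernstein step, and it traces to one structural decision.

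You collapse the two right-hand-side terms into a single coefficient $a_2(x)=(p-1)|\nabla u_2|^2-|x|^{-\tau-\sigma}e^{-ku_2}|\nabla u_2|^{m+2-p}$ ``playing the role of $a_{1,1}$,'' and propose to define $\mathcal L_2$ by substituting $a_2$ for $a_{1,1}$ in \eqref{l1def}, landing on a schematic bound $\mathcal L_2[w_2]\leq\bigl[D_1 z_2/(\phi|x|^2)-D_2\,a_2^2\bigr]\phi^{2\alpha_2}$. This cannot give $z_2(x^*)\lesssim|x^*|^{-2}$. Write $a_2=(p-1)z_2-G$ with $G\geq 0$; on the set where $G$ is comparable to $(p-1)z_2$, the quantity $a_2$ (hence $a_2^2$) can be arbitrarily small while $z_2$ is arbitrarily large, so $-D_2a_2^2$ carries no control on $z_2$. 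The paper avoids this by keeping the two pieces separate: $a_{0,2}=(p-1)z_2$ and $a_{1,2}=h_2(u_2)f(x,z_2)$ (see \eqref{op}, \eqref{a01}, \eqref{vee}). The decisive absorbing term in \eqref{up2} is then $-d_0(j-1)z_j^2$, i.e. $-d_0z_2^2$, which comes out of the inequality $|D^2u_2|^2\geq(\Delta u_2)^2/N$ together with the elementary estimate \eqref{in3}, with $b_0=a_{0,2}$ contributing $(p-1)^2z_2^2$. Coupled with the choice $\alpha_2=1/2$ (so $w_2=\phi z_2$), this yields $\mathcal L_2[w_2]\leq z_2(-d_0 w_2+d_1|x|^{-2})$ and the desired bound follows at the interior maximum of $w_2$. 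Your ``favourable sign of the exponential absorption term'' is not the operative mechanism — the exponential absorption term $-G$ has the \emph{wrong} sign for your lumped quantity, which is exactly why $a_2^2$ can degenerate; the sign of $a_{1,2}$ only enters the paper's argument through the nonnegativity of $\mathcal W_2$ (Remark~\ref{remark2}), which is an auxiliary consideration involving the parameter $\theta$, not the main absorption.

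Relatedly, your prescription for $\mathcal L_2$ is off. The $(p-1)z_2$ piece is linear in $z_2$, whereas the $G$ piece is homogeneous of degree $(m+2-p)/2$ in $z_2$, so their linearizations contribute with different coefficients. The paper's operator \eqref{L1} reflects this with \emph{two} correction terms, $2a_{0,j,w}$ and $(m+2-p)a_{1,j,w}$; replacing $a_{1,1}$ by your lumped $a_2$ in \eqref{l1def} (which omits the $2a_{0,j,w}$ term because $a_{0,1}\equiv0$) would assign the coefficient $(m+2-p)$ to both pieces and fail to produce the needed cancellations in the Bochner computation (Lemma~\ref{lm12}). To repair the argument, separate the two coefficients, include the $2a_{0,2,w}$ term in $\mathcal L_2$, take $\alpha_2=1/2$, and extract $-d_0z_2^2$ from $a_{0,2}^2$ as in Lemma~\ref{lm11}.
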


\begin{proof} Fix $x_0\in \mathbb{R}^N$ with $0<|x_0| \leq r_0$. To prove \eqref{gu2a} and \eqref{gu1a} at $x=x_0$ for a positive solution $u_1$ of \eqref{le}, we assume that 
	$|\nabla u_1(x_0)|>0$. 
	Let $\mathcal G$ denote the maximal connected component of the set $\{x\in \Omega\setminus\{0\}:\  |\nabla u_1(x)|>0\}$ containing $x_0$.
	We set $\rho_0:=|x_0|$.  
	Let $C_0>0$ be a small constant 
	such that $C_0 u_1(x) < 1$ for every $\rho_0/2  \leq |x| \leq 3 \rho_0 /2$. We define 
	\begin{equation} \label{defu2} u_2(x):=-\log (C_0 u_1(x))\quad \mathrm{for \ every}\  
	|x|\in (\rho_0/2 , 3 \rho_0 /2). 
	\end{equation}
	We set $z_j(x)=|\nabla u_j(x)|^2>0$ for $j=1,2$ and $x\in \mathcal G\cap B_{\rho_0/2}(x_0)$. 
	For any $t>0$, we denote
	\begin{equation} \label{hj}
	f(x,t):=|x|^{-\tau-\sigma} t^{\frac{m+2-p}{2}} \ \mathrm{for } \ x\in \mathbb R^N\setminus\{0\}\ \mathrm{and}\ 
	h_j(t):=\left\{ \begin{aligned}
	& t^q &&\mathrm{if}\ j=1,&\\
	& -C_0^{-k} e^{-kt} && \mathrm{if}\ j=2.& 
	\end{aligned} 
	\right.
	\end{equation}
	For $j=1,2$ and $ x \in \mathcal G\cap B_{\rho_0/2}(x_0)$, let $a_{0,j,w}(x)$, $a_{1,j,w}(x)$ and $\mathcal A_j [w]$ be given by
	\begin{equation} \label{op}
	\left\{ \begin{aligned}
	& a_{0,j,w}(x):=(j-1) (p-1)\langle \nabla u_j,\nabla w\rangle,
	\quad a_{1,j,w}(x):=h_j(u_j)  f(x,z_j) \frac{\langle \nabla u_j,\nabla w\rangle}{z_j},\\
	& \mathcal A_j [w](x):=-\Delta w(x) +a_{2,w}(x)+a_{3,j,w}(x),\ \text{where we define}\\
	& a_{2,w}(x):=- \sigma \frac{\langle x,\nabla w \rangle}{|x|^2},\quad
	a_{3,j,w}(x):=-(p-2) \frac{\langle (D^2 w)(\nabla u_j), \nabla u_j \rangle}{z_j}.
	\end{aligned} \right.
	\end{equation} 
	When $w=u_j$ in $a_{i,j,u_j}$ for $i=0,1,3$, we simply write $a_{i,j}$. For symmetry of notation, we also use $a_{2,j}$ instead of 
	$a_{2,u_j}$. In particular, since $\nabla z_j=2(D^2 u_j)(\nabla u_j)$, we have \begin{equation} \label{a01} 
	a_{0,j}=(j-1)(p-1)z_j,\quad a_{1,j}(x)=h_j(u_j)  f(x,z_j),\quad a_{3,j}=-\frac{(p-2)}{2}\frac{\langle \nabla z_j,\nabla u_j\rangle}{z_j} .\end{equation} 
	Then, $u_j$ (with $j=1,2$) satisfies the equation
	\begin{equation} \label{vee}
	\Delta u_j =
	\sum_{i=0}^3 a_{i,j}(x) \quad \text{for all } x \in \mathcal G\cap B_{\rho_0/2}(x_0).
	\end{equation}
	Next, for $j=1,2$ and $ x \in \mathcal G\cap B_{\rho_0/2}(x_0)$, we introduce the operator for $w \in C^2 (\mathcal G \cap B_{\rho_0/2}(x_0))$
	\begin{equation} \label{L1}
	\mathcal L_j[w]:=\mathcal A_j[w] - M \frac{\langle \nabla z_j,\nabla w \rangle}{z_j}
	+2a_{0,j,w}+\left(m+2-p\right) a_{1,j,w},
	\end{equation} 
	where $M=M(m,N,p,q,\sigma,\tau)>0$ denotes a large constant (see \eqref{mdeff} in Lemma~\ref{lm11}).

	Let $\eta \in C^{\infty}_c(\mathbb{R}^N)$ be such that $0 \leq \eta \leq 1$, $\mathrm{Supp} (\eta) \subset B_{1/2}(0)$ and $\eta \equiv 1$ in $B_{1/3}(0)$. 
	We define 
	\begin{equation} \label{pf1}
	\phi(x)=\eta(\rho_0^{-1} (x-x_0))\quad \text{for all }x\in \mathbb R^N. 
	\end{equation}
	Let $\omega$ denote the following open set
	\begin{equation} \label{omegadef} \omega:=\mathcal G\cap \{x\in  B_{\rho_0/2}(x_0):\ (x-x_0)/\rho_0\in \mathrm{Int}\,(\mathrm{Supp} (\eta))\}.\end{equation}
	Note that there exists a positive constant $c'=c'(N)$ such that 
	\begin{equation} \label{c'} |D^2 \phi| \leq c' \rho_0^{-2}\ \ \text{and }\ \ |\nabla \phi| \leq c' \rho_0^{-1} \phi^{\frac{1}{2}}\quad
	\text{for every } x\in \omega.\end{equation} 
	The aim of Lemma~\ref{lm12} in Section~\ref{app1} is to compute
	$ \mathcal L_j[w_j]$ for $w_j$ given by  
	\begin{equation} \label{wj12} 
	w_j:=\phi^{2 \alpha_j} z_j \ \text{ with }\ \alpha_1=1/(2k)\ \ \text{and }\ \alpha_2=1/2.\end{equation}  
	Then, in Lemma~\ref{lm11}, we obtain an upper bound estimate of $ \mathcal L_j[w_j]$, proving that there exist positive constants $d_i=d_i(m,N,p,q,\sigma, \tau)$ with $i=0,1,2,3$ such that for $j=1,2$, we have
	\begin{equation} \label{up}
	\mathcal{L}_j[w_j]\leq \left[-d_0(j-1) z_j^2+d_1 \frac{z_j}{\phi |x|^2}-d_2 \left(a_{1,j}(x)\right)^2 -d_3 \frac{|\nabla z_j|^2}{z_j} \right] \phi^{2 \alpha_j}\ 
	\text{for all }x\in \omega.
	\end{equation}
	Since $\alpha_2=1/2$ and $w_2=\phi z_2$, for $j=2$, we obtain that  
	\begin{equation} \label{max} \mathcal{L}_2[w_2]\leq z_2\left(-d_0\,w_2+d_1|x|^{-2}\right)\quad \mathrm{for\ all }\ x\in \omega.\end{equation}
	Using that $w_2|_{\partial\omega}=0$, there exists 
	$x^*\in \omega$ such that $\max_{x\in \overline{\omega}}w_2(x)=w_2(x^*)>0$. Then, since
	$\nabla w_2(x^*)=0$ and $(D^2 w_2)(x^*)$ is negative semi-definite, we find that 
	\begin{equation} \label{max1} \mathcal{L}_2[w_2](x^*)= \mathcal{A}_2[w_2](x^*)=-(\Delta w_2)(x^*)-(p-2)
	\frac{\langle (D^2 w_2)(\nabla u_2), \nabla u_2 \rangle}{|\nabla u_2|^2}(x^*)\geq 0. 
	\end{equation} We show that $\mathcal{A}_2[w_2](x^*)\geq 0$. 
	Indeed, let $\lambda_1,\ldots, \lambda_N$ denote the eigenvalues of 
	$ (D^2 w_2)(x^*)$, the Hessian of $w_2$ at $x^*$. 
	Since $ (D^2 w_2)(x^*)$ is negative semi-definite, we have $\lambda_j\leq 0$ for every $j=1,\ldots,N$. 
	We assume the eigenvalues are arranged such that 
	$\lambda_1\leq \lambda_2\leq \ldots \leq \lambda_N\leq 0$. Now, the
	Rayleigh--Ritz Theorem applied to the real symmetric matrix $ (D^2 w_2)(x^*)$ yields that
	\begin{equation} \label{ritz}\lambda_1|\boldsymbol{\xi}|^2=\left(\min_{1\leq j\leq N} \lambda_j\right) | \boldsymbol{\xi}|^2 
	\leq \langle (D^2 w_2)(x^*)\,\boldsymbol{\xi},\boldsymbol{\xi} \rangle \leq \left(\max_{1\leq j\leq N} \lambda_j\right) | \boldsymbol{\xi}|^2 
= \lambda_N |\boldsymbol{\xi}|^2\end{equation} 	
for every $\boldsymbol{\xi}\in \mathbb R^N$. 
Since $(\Delta w_2)(x^*)=\sum_{i=1}^N \lambda_i$, using \eqref{ritz} with $\boldsymbol{\xi}=(\nabla u_2) (x^*)$, we obtain that  
	$$-(\Delta w_2)(x^*)-(p-2)
	\frac{\langle (D^2 w_2)(\nabla u_2), \nabla u_2 \rangle}{|\nabla u_2|^2}(x^*) \geq \left\{\begin{aligned}
	& -(p-1)\lambda_1-\sum_{i=2}^{N} \lambda_i && \mathrm{if }\ 1<p\leq 2,&\\
	& -\sum_{i=1}^{N-1}\lambda_i-(p-1)\lambda_N && \mathrm{if }\ 2<p< \infty.&
	\end{aligned} \right.
	$$ This proves the inequality in \eqref{max1}.  
	Letting $x=x^*$ in \eqref{max} and using \eqref{max1}, we arrive at 
	\begin{equation} \label{max2} w_2(x^*)\leq \left(d_1/d_0\right) |x^*|^{-2},
	\end{equation} where $d_0$ and $d_1$ are positive constants depending only on $m,N,p,q,\sigma$ and $\tau$. Recall that 
	$w_2(x^*)=\max_{x\in \overline{\omega}}w_2(x)$ and $|x^*|\geq |x_0|/2$. Since $\eta\equiv 1$ on $B_{1/3}(0)$, we have 
	$\phi(x_0)=1$. Hence, from \eqref{defu2} and \eqref{max2}, 
	we obtain that 
	$$ \frac{|\nabla u_1(x_0)|^2}{(u_1(x_0))^2}=|\nabla u_2(x_0)|^2=\phi(x_0)|\nabla u_2(x_0)|^2\leq w_2(x^*)\leq \frac{4 d_1}
	{d_0|x_0|^{2}}.
	$$ This proves the assertion of \eqref{gu1a} for $x=x_0$ arbitrary in $ \Omega\setminus\{0\}$ with 
	$|\nabla u_1(x_0)|\not=0$. The proof of Lemma~\ref{lm1} is thus complete.   
\end{proof}

\vspace{0.2cm}
{\em Proof of Theorem~{\rm\ref{thm1}} completed.} By taking $j=1$ and $\alpha_1=1/(2k)$ in \eqref{up}, we find that 
\begin{equation} \label{j22}
\mathcal L_1[w_1]\leq d_1 \phi^{\frac{1}{k}-1}|x|^{-2}z_1-d_2 u_1^{2q} \phi^{\frac{1}{k}}|x|^{-2(\tau+\sigma)} z_1^{m+2-p}
\quad \mathrm{for\ all }\ x\in \omega. 
\end{equation}
In \eqref{j22} we use Lemma~\ref{lm1} and $w_1=\phi^{1/k}z_1$ to conclude that 
\begin{equation} \label{j23}
\mathcal L_1[w_1]\leq  \phi^{\frac{1}{k}-1}|x|^{-2}z_1\left(d_1-C_1^{-2q}d_2 |x|^{2\ell} w_1^k\right)
\quad \mathrm{for\ all }\ x\in \omega, 
\end{equation} where $C_1>0$ is the constant appearing in \eqref{gu1a}, while $k$ and $\ell$ are given by \eqref{kl}. 

Let $x^*\in \omega$ be such that $\max_{x\in \overline{\omega}}w_1(x)=w_1(x^*)>0$. As before, we arrive at 
\begin{equation} \label{max11} \mathcal{L}_1[w_1](x^*)= \mathcal{A}_1[w_1](x^*)=\left[-\Delta w_1-(p-2)
\frac{\langle (D^2 w_1)(\nabla u_1), \nabla u_1 \rangle}{|\nabla u_1|^2}\right](x^*)\geq 0. 
\end{equation} 
Letting $x=x^*$ in \eqref{j23} and using \eqref{max11}, we arrive at 
\begin{equation} \label{max22} w_1(x^*)\leq \left(\frac{d_1}{d_2}\right)^{\frac{1}{k}} C_1^{\frac{2q}{k}}|x^*|^{-\frac{2\ell}{k}},
\end{equation} where $C_1$, $d_1$ and $d_2$ are positive constants depending only on $m,N,p,q,\sigma$ and $\tau$. 
Recall that 
$w_1(x^*)=\max_{x\in \overline{\omega}}\phi (x)|\nabla u_1(x)|^2$ and 
$|x^*|\geq |x_0|/2$. Since
$\phi(x_0)=1$, from \eqref{max22}, 
we obtain that 
$$ |\nabla u_1(x_0)|^2=\left(\phi(x_0)\right)^{1/k}|\nabla u_1(x_0)|^2\leq w_1(x^*)\leq C^2
|x_0|^{-\frac{2\ell}{k}},
$$ where $C:=\left(2^\ell C_1^q \sqrt{d_1/d_2}\right)^{1/k}$ is a positive constant depending only on $m,N,p,q,\sigma$ and $\tau$. 
This proves the assertion of \eqref{gu2a} for $x=x_0$ arbitrary in $ \Omega\setminus\{0\}$ with 
$|\nabla u_1(x_0)|\not=0$. The proof of our Theorem~\ref{thm1} is now finished.   
$\hfill \square$

\begin{lemma} \label{gu3}
	Let \eqref{GG} hold  and $\Omega_1  $ be any domain in $\mathbb{R}^N$.  
	There exists a positive constant $C_1=C_1(m,N,p,q,\sigma, \tau)$ such that any positive solution $u$ of \eqref{le} in $\Omega_1$ satisfies
	\begin{equation} \label{gue3a}
	|\nabla u (x)| \leq \frac{C_1  u(x)}{{\rm dist} (x, \partial \Omega_1)} \ \ \mbox{and } \ \ |\nabla u (x)| \leq C_1 {\rm dist}(x, \partial \Omega_1)^{-\frac{\ell}{k}} \quad \mbox{for all } x \in \Omega_1.
	\end{equation}
\end{lemma}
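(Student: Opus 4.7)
The plan is to repeat the arguments of Lemma~\ref{lm1} and Theorem~\ref{thm1} verbatim, with the single change that the length scale $\rho_0 := |x_0|$ is replaced by $\rho_0 := \mathrm{dist}(x_0, \partial\Omega_1)$. Fix $x_0 \in \Omega_1$ with $|\nabla u(x_0)| > 0$ (otherwise \eqref{gue3a} is trivial). Let $\mathcal G$ denote the connected component of $\{x \in \Omega_1 : |\nabla u(x)| > 0\}$ containing $x_0$, and construct $u_2$, $z_j$, $\phi$, $\omega$, $w_j$, $\mathcal A_j$, $\mathcal L_j$ on $\mathcal G \cap B_{\rho_0/2}(x_0) \subset \Omega_1$ exactly as in \eqref{defu2}--\eqref{wj12}, using this new $\rho_0$.

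The crucial observation is that the derivation of the upper bound \eqref{up} for $\mathcal L_j[w_j]$ (carried out in Lemmas~\ref{lm11}--\ref{lm12} of Section~\ref{app1}) is purely local: it uses only that $u$ solves \eqref{le} on the support of $\phi$, the structural hypothesis \eqref{GG}, and the scaling bounds \eqref{c'} for derivatives of $\phi$. Hence \eqref{up} continues to hold on $\omega$ with the same universal constants $d_i = d_i(m,N,p,q,\sigma,\tau)$, and the Rayleigh--Ritz step at the interior maximum $x^*$ of $w_j$ on $\overline\omega$ still gives $\mathcal A_j[w_j](x^*) \geq 0$ as in \eqref{max1}, \eqref{max11}. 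The entire Bernstein machinery of Section~\ref{sec2} carries over.

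For $j = 2$, combining $\mathcal L_2[w_2](x^*) \geq 0$ with \eqref{max} will yield $w_2(x^*) \leq (d_1/d_0)|x^*|^{-2}$; for $j = 1$, combining $\mathcal L_1[w_1](x^*) \geq 0$ with \eqref{j23} will yield \eqref{max22}. Translating these into pointwise bounds at $x_0$ (where $\phi(x_0) = 1$) then produces $|\nabla u(x_0)| \leq C_1 u(x_0)/\rho_0$ and $|\nabla u(x_0)| \leq C_1 \rho_0^{-\ell/k}$, which are precisely the two inequalities in \eqref{gue3a}.

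The main obstacle is ensuring that the $|x|^{-2}$ and $|x|^{-2\ell/k}$ factors appearing in \eqref{max} and \eqref{max22} produce $\rho_0^{-2}$ and $\rho_0^{-2\ell/k}$ bounds at $x^*$; this requires $|x^*|$ to be bounded below by a multiple of $\rho_0$. Since $|x^*| \geq |x_0| - \rho_0/2$, the requirement reduces to $\rho_0 \leq |x_0|$, which holds automatically under the natural interpretation $\Omega_1 \subset \mathbb R^N \setminus \{0\}$ (because then $\mathrm{dist}(x_0, \partial\Omega_1) \leq |x_0|$). If one wishes to allow $0 \in \Omega_1$, one instead takes $\rho_0 := \min\{|x_0|,\,\mathrm{dist}(x_0,\partial\Omega_1)\}$ and, in the case $|x_0| < \mathrm{dist}(x_0,\partial\Omega_1)$, invokes Lemma~\ref{lm1} and Theorem~\ref{thm1} directly to conclude.
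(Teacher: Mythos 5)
Your overall strategy coincides with the paper's own (very brief) proof: re-run Lemma~\ref{lm1} and Theorem~\ref{thm1} with $\rho_0:=\mathrm{dist}(x_0,\partial\Omega_1)$. You also correctly sense that this substitution is not completely automatic because the Bernstein machinery in Section~\ref{app1} compares the cutoff scale $\rho_0$ with the intrinsic weight scale $|x|$. However, there are two problems with the way you handle this.

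First, your diagnosis is incomplete. You focus only on the step $|x^*|\ge c\rho_0$ used to pass from $|x^*|^{-2}$ (resp.\ $|x^*|^{-2\ell/k}$) to $\rho_0^{-2}$ (resp.\ $\rho_0^{-2\ell/k}$). But the estimate \eqref{up} itself is \emph{not} ``purely local'' in the sense you claim: its derivation in Lemmas~\ref{lm13} and \ref{lm11} repeatedly converts the $\phi$-derivative bounds $|\nabla\phi|\le c'\rho_0^{-1}\phi^{1/2}$, $|D^2\phi|\le c'\rho_0^{-2}$ of \eqref{c'} into $|x|^{-1}$ and $|x|^{-2}$ bounds (e.g.\ in \eqref{ccv}, in the estimate $|\Delta\phi|\le 9\sqrt{N}c'/(4|x|^2)$, and in \eqref{c''}), and those conversions use the \emph{opposite} comparison $|x|\le\tfrac32\rho_0$, which is automatic when $\rho_0=|x_0|$ but fails on $\omega$ whenever $\rho_0<|x_0|$. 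So even after restricting to $\rho_0\le|x_0|$, one cannot simply quote \eqref{up} as written; one must instead keep $\rho_0^{-1},\rho_0^{-2}$ for the $\phi$-terms and then dominate the genuine $|x|^{-1},|x|^{-2}$ contributions (coming from $a_{2,j}$ and $\mathcal Q_j$) by $\rho_0^{-1},\rho_0^{-2}$ using $|x|>\rho_0/2$. Your write-up skips over this reworking of Lemmas~\ref{lm13}--\ref{lm11}.

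Second, and more seriously, your fallback for the case $0\in\Omega_1$ with $|x_0|<\mathrm{dist}(x_0,\partial\Omega_1)$ is wrong. You propose to conclude by invoking Lemma~\ref{lm1} and Theorem~\ref{thm1} directly, which (when applicable) yield $|\nabla u(x_0)|\le C_1 u(x_0)/|x_0|$ and $|\nabla u(x_0)|\le C|x_0|^{-\ell/k}$. But since $\ell/k>0$ by \eqref{GG} and $|x_0|<\mathrm{dist}(x_0,\partial\Omega_1)$, both $t\mapsto 1/t$ and $t\mapsto t^{-\ell/k}$ are decreasing, so $u(x_0)/|x_0|>u(x_0)/\mathrm{dist}(x_0,\partial\Omega_1)$ and $|x_0|^{-\ell/k}>\mathrm{dist}(x_0,\partial\Omega_1)^{-\ell/k}$. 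The bounds you invoke are therefore \emph{weaker} than the inequalities \eqref{gue3a} you are trying to prove, not stronger, and the implication goes the wrong way. (Moreover, Theorem~\ref{thm1} also needs $\overline{B_{2|x_0|}(0)}\subset\Omega_1$, which in this regime requires $\mathrm{dist}(x_0,\partial\Omega_1)>3|x_0|$, an additional unverified hypothesis.) So this branch of your argument does not establish \eqref{gue3a}; the case $0\in\Omega_1$ with $\mathrm{dist}(x_0,\partial\Omega_1)>|x_0|$ is left open.
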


\begin{proof} The claim follows by taking $\rho_0={\rm dist}(x_0, \partial \Omega_1)$ rather than $\rho_0=|x_0|$ in the proofs of Theorem~\ref{thm1} and Lemma~\ref{lm1}. 
\end{proof}

The Liouville-type results in \cite{FarSer}*{Theorem 2,3} are improved by the following.

\begin{corollary}[Liouville-type theorem] \label{guec1}
	Let  \eqref{GG} hold. Any $C^1(\mathbb{R}^N)$ positive solution of \eqref{le} in $\mathbb{R}^N$ must be identically constant.
\end{corollary}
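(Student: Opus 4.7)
The strategy is to apply Lemma~\ref{gu3} with $\Omega_1 = \mathbb{R}^N$. Let $u \in C^1(\mathbb{R}^N)$ be a positive solution of \eqref{le} in $\mathbb{R}^N$, and fix $x_0 \in \mathbb{R}^N$. For $\Omega_1 = \mathbb{R}^N$ one has $\partial \Omega_1 = \emptyset$, so $\operatorname{dist}(x_0, \partial \Omega_1) = +\infty$, and the second inequality in \eqref{gue3a} yields
\begin{equation*}
|\nabla u(x_0)| \leq C_1 \cdot (+\infty)^{-\ell/k} = 0,
\end{equation*}
since $\ell/k > 0$ by \eqref{GG}. Hence $\nabla u \equiv 0$ on $\mathbb{R}^N$, and $u$ is identically constant.

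One reads this formally as a limit. The proof of Lemma~\ref{gu3} is obtained from those of Theorem~\ref{thm1} and Lemma~\ref{lm1} by substituting the choice $\rho_0 = |x_0|$ with an arbitrary $\rho_0 < \operatorname{dist}(x_0, \partial \Omega_1)$. For every such finite $\rho_0$ one derives $|\nabla u(x_0)| \leq C_1 \rho_0^{-\ell/k}$, and for $\Omega_1 = \mathbb{R}^N$ one lets $\rho_0 \to \infty$.

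The key technical point --- and the main obstacle I would anticipate --- is ensuring that the proof of Lemma~\ref{gu3} actually extends to arbitrarily large $\rho_0$. For $\rho_0$ large the ball $B_{\rho_0/2}(x_0)$ swallows the origin, where the weights $|x|^\sigma$ and $|x|^{-\tau}$ are singular and where \eqref{le} fails classically. Here the $C^1(\mathbb{R}^N)$ hypothesis is essential: it forces $|\nabla u|$ to remain bounded near $0$, while the standing assumption $p \leq N + \sigma$ from \eqref{GG} controls the singular weights in the ensuing integrals.

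A cleaner alternative, which sidesteps this subtlety, is to multiply \eqref{le} by a non-negative cut-off $\eta_R \in C_c^\infty(\mathbb{R}^N \setminus \{0\})$ equal to $1$ on the annulus $\{1/R \leq |x| \leq R\}$ and supported in $\{1/(2R) \leq |x| \leq 2R\}$, integrate by parts, and use Theorem~\ref{thm1} (giving $|\nabla u(x)| \leq C|x|^{-\ell/k}$) at infinity together with the $C^1$ regularity near $0$ to show that both boundary contributions from $\nabla \eta_R$ vanish as $R \to \infty$. This would deliver
\begin{equation*}
\int_{\mathbb{R}^N} |x|^{-\tau} u^q |\nabla u|^m \, dx = 0,
\end{equation*}
and since $u > 0$ one concludes $\nabla u \equiv 0$ when $m > 0$ (so $u$ is constant), and a vacuous statement when $m = 0$ (no positive solution exists in that case). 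The delicate step in this alternative is showing the decay of the outer boundary integral, which naively behaves as $R^{\sigma + N - 1 - (p-1)\ell/k}$; in parameter ranges where this exponent is not negative, Young's inequality applied to $\eta_R^s$ (with $s$ suitably chosen) absorbs part of the boundary contribution into the positive volume term.
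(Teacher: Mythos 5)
Your main argument is exactly the paper's: invoke Lemma~\ref{gu3} at a fixed point with a ball of arbitrarily large radius. The paper writes $\Omega_1 = B_R(x_1)$ and lets $R \to \infty$, which is the same as your $\Omega_1 = \mathbb{R}^N$ with $\operatorname{dist}(x_0,\partial\Omega_1) = +\infty$; the core of the proposal is correct and matches.

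The remainder of the write-up is extra, and two points deserve comment. First, your concern that $B_{\rho_0/2}(x_0)$ may swallow the origin for large $\rho_0$ is a subtlety that lives inside Lemma~\ref{gu3} itself rather than in this corollary, and the paper does not engage with it either, so you are on the same footing as the authors; but the justification you offer --- that $C^1(\mathbb{R}^N)$ regularity and $p \le N+\sigma$ ``control the singular weights in the ensuing integrals'' --- mislocates the difficulty, since the proof of Lemma~\ref{gu3} is a pointwise Bernstein/maximum-principle argument with no integrals, and the relevant issue is whether the maximiser $x^*$ of $w_j$ can be kept a definite distance from the origin once $\rho_0 \gg |x_0|$ (the published proof uses $|x^*| \ge |x_0|/2$, which depends on $\rho_0 = |x_0|$). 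Second, the integration-by-parts alternative is a genuinely different route, but as sketched it is not a proof: you do not establish the outer boundary decay (exponent $\sigma + N - 1 - (p-1)\ell/k$) under \eqref{GG} alone, the absorption via Young's inequality is only gestured at, and the inner boundary term also needs the behaviour of $|x|^{\sigma}|\nabla u|^{p-1}$ near $0$ to be pinned down, which $u\in C^1(\mathbb{R}^N)$ alone does not do when $\sigma<0$.
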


\begin{proof}
	We follow \cite{Lions}*{Corollary IV.2}. Fix $x_1 \in \mathbb{R}^N$. Let $R>0$ be arbitrary and $\Omega_1=B_{R}(x_1)$ in Lemma~\ref{gu3}. Then for $C_1$ as in Lemma~\ref{gu3}, we find that
	$|\nabla u (x_1)| \leq C_1 R^{-\frac{\ell}{k}}$.
	Letting $R \rightarrow \infty$, we obtain that $|\nabla u(x_1)|=0$.  Since $x_1$ was arbitrary, we conclude the claim.
\end{proof}

\begin{corollary}[{\em A priori} estimates of solutions] \label{ch4ap1}
	Let  \eqref{GG} hold and $u$ be an arbitrary positive solution of \eqref{le}. If $k>\ell$, then 
	$u\in L^\infty_{\rm loc}(\Omega)$. If 
	$k\leq \ell$, then there exists a positive constant $c_1$ depending only on $m,N,p,q,\sigma$ and $\tau$ such that for all $ 0<|x|<r_0$ with $\overline{B_{2r_0}(0)}\subset \Omega$, it holds
	\begin{equation} \label{ooo} u(x)\leq \left\{ \begin{aligned}
	& \max_{\partial B_{r_0}(0)} u+ c_1 \log (r_0/|x|)  && \text{if } k=\ell; &\\
	& \max_{\partial B_{r_0}(0)} u+ c_1 |x|^{1-\frac{\ell}{k}} && \text{if } k<\ell. &
	\end{aligned} \right. \end{equation} 
	\end{corollary}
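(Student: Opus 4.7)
The plan is to deduce the corollary from the pointwise gradient bound provided by Theorem~\ref{thm1} via a straightforward radial integration. Fix $x$ with $0<|x|<r_0$, set $\omega_0:=x/|x|$, and consider the radial segment $\gamma(t)=t\omega_0$ for $t\in[|x|,r_0]$. Since $\overline{B_{2r_0}(0)}\subset\Omega$, this segment lies entirely in $\overline{B_{r_0}(0)}\setminus\{0\}\subset\Omega^*$, where $u\in C^2$; hence the fundamental theorem of calculus yields
\begin{equation*}
u(x)=u(r_0\omega_0)-\int_{|x|}^{r_0}\<\nabla u(t\omega_0),\omega_0\>\,dt\leq \max_{\partial B_{r_0}(0)}u+\int_{|x|}^{r_0}|\nabla u(t\omega_0)|\,dt.
\end{equation*}
Applying Theorem~\ref{thm1} at each $t\omega_0$ with $t\in(0,r_0]$ gives $|\nabla u(t\omega_0)|\leq C\,t^{-\ell/k}$, so the whole problem reduces to estimating the scalar integral $\int_{|x|}^{r_0}t^{-\ell/k}\,dt$.

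The three assertions then follow by splitting on the sign of $1-\ell/k$. If $k>\ell$, the exponent $-\ell/k$ exceeds $-1$ and the integral is bounded by $r_0^{1-\ell/k}/(1-\ell/k)$, independently of $|x|$; combined with the fact that $u\in C^2(\Omega^*)$ is automatically bounded on compact subsets of $\Omega^*$, this yields the local boundedness $u\in L^\infty_{\mathrm{loc}}(\Omega)$. If $k=\ell$, the integral evaluates exactly to $\log(r_0/|x|)$, producing the first line of \eqref{ooo}. If $k<\ell$, then $1-\ell/k<0$ and the integral is controlled by $|x|^{1-\ell/k}/(\ell/k-1)$, whose dominant contribution comes from the lower endpoint, giving the second line. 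Taking $c_1$ to be $C$ times $\max\{1,|1-\ell/k|^{-1}\}$, with $C$ the constant supplied by Theorem~\ref{thm1}, preserves the required dependence on $m,N,p,q,\sigma,\tau$ alone.

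I do not anticipate any substantive obstacle: the whole argument is a one-shot radial integration once Theorem~\ref{thm1} is in hand. The only items to verify carefully are that the integration path remains in $\Omega^*$ (immediate from $\overline{B_{2r_0}(0)}\subset\Omega$) and that $\max_{\partial B_{r_0}(0)}u$ is finite (automatic, since $\partial B_{r_0}(0)$ is a compact subset of $\Omega^*$ on which $u$ is continuous). The radial path is chosen because the gradient bound $t^{-\ell/k}$ is a function of the distance to the origin alone, which makes the resulting one-dimensional integral transparent and the dichotomy $k\lessgtr\ell$ directly visible.
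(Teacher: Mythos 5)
Your argument is essentially the same as the paper's: the paper parameterizes the segment from $x$ to $X=r_0 x/|x|$ affinely by $t\in[0,1]$ and writes $|u(x)-u(X)|\leq |x-X|\int_0^1(t|x|+(1-t)r_0)^{-\ell/k}\,dt$, which after the substitution $s=t|x|+(1-t)r_0$ reduces exactly to your radial integral $\int_{|x|}^{r_0}s^{-\ell/k}\,ds$. Your casework on the sign of $1-\ell/k$, the resulting bounds, and the bookkeeping of the constant $c_1$ are all correct.
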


\begin{proof} The proof follows similarly to that in \cite{BVGH}*{Section 2.2}.
	Fix $x \in B_{r_0}(0) \setminus \{ 0 \}$ and let $X=r_0x/|x|$.  Using Theorem~\ref{thm1}, we find that
	$$ 
	|u(x)-u(X)| \leq  |x-X| \int^1_0 | (\nabla u)(tx+(1-t)X) |\, dt 
	 \leq C |x-X| \int^1_0 \left( t|x| + (1-t)r_0 \right)^{- \frac{\ell}{k}} \, dt.
	$$
The conclusion follows by integration. 
\end{proof}

For a different proof of the second inequality in \eqref{ooo} in the case $\sigma=\tau=0<m<2=p$,  we refer to 
Ching and C\^{\i}rstea \cite{CC}*{Lemma~3.4}, where
a comparison with a suitable boundary blow-up super-solution is used.  
Unlike the case $m=0$, it has been observed in \cite{CC}*{Remark~3.5} that the term  $\max_{\partial B_{r_0}(0)} u$ arising in the estimate \eqref{ooo}
is due to the introduction of the gradient factor $|\nabla u|^m$ in \eqref{le} and cannot be removed. 

\section{Auxiliary results} \label{app1}

In Lemma~\ref{smp11} we prove that the strong maximum principle is applicable for the non-negative solutions of \eqref{le} when \eqref{GG} holds. The proof of Lemma~\ref{lm1} was essentially based on the estimate 
of \eqref{up}, which follows from Lemma~\ref{lm11} and Remark~\ref{remark2}. We present here the proof of Lemma~\ref{lm11}, which is quite intricate and relies on Lemmas~\ref{lm12} and \ref{lm13}.

\begin{lemma}[Strong maximum principle] \label{smp11} Assume that  \eqref{GG} holds. 
	For any non-negative solution $u$ of \eqref{le}, either $u>0$ in $\Omega^*$ or $u\equiv 0$ in $\Omega^*$.  
\end{lemma}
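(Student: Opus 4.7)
My plan is to establish the standard strong-maximum-principle dichotomy: show that $\mathcal Z := \{x \in \Omega^* : u(x) = 0\}$ is both closed and open in $\Omega^*$, and then invoke connectedness of $\Omega^*$ (which holds because $\Omega$ is a domain and $N \geq 2$) to conclude. Closedness is immediate from continuity of $u$; the work lies in proving openness. Fix $x_0 \in \mathcal Z$ and a ball $B_{\rho_0}(x_0) \Subset \Omega^*$ small enough that both $|x|^\sigma$ and $|x|^{-\tau}$ are comparable to positive constants on it; on such a ball \eqref{le} reduces to a standard $p$-Laplacian type equation $\mathrm{div}(\tilde a |\nabla u|^{p-2}\nabla u) = \tilde b\, u^q|\nabla u|^m$ with smooth, strictly positive $\tilde a, \tilde b$. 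Since $u \in C^2$, $u \geq 0$, and $x_0$ is an interior minimum, $\nabla u(x_0) = 0$ and $D^2 u(x_0)$ is positive semidefinite; a second-order Taylor expansion then yields the elementary bound $|\nabla u(x)|^2 \leq C_0\, u(x)$ in a slightly smaller ball $B_\rho(x_0)$. Substituting into the equation gives the one-sided differential inequality
\[
0 \;\leq\; \mathrm{div}\bigl(\tilde a(x)|\nabla u|^{p-2}\nabla u\bigr) \;\leq\; C\, u^{q + m/2}
\quad\text{in } B_\rho(x_0).
\]

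At this point I would invoke V\'azquez's strong maximum principle in the weighted quasilinear form due to Pucci and Serrin: a non-negative $C^1$ function satisfying such an inequality with right-hand side $\beta(u) = Cu^{q+m/2}$ must vanish identically whenever it vanishes at one interior point, provided $\beta$ is nondecreasing, $\beta(0) = 0$, and the Osgood-type condition $\int_{0^+} ds/[sB(s)]^{1/p} = +\infty$ holds, where $B(s) = \int_0^s \beta(t)\,dt$. This would force $u \equiv 0$ in a neighborhood of $x_0$, giving openness of $\mathcal Z$ and completing the proof. The main obstacle I anticipate is verifying the Osgood condition across the full parameter range in \eqref{GG}: for $\beta(s) = Cs^{q+m/2}$ it amounts to $q + m/2 \geq p - 2$, which follows from $k>0$ at least when $m \leq 2$ but is not automatic when $m$ is large. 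In that regime the crude Taylor bound $|\nabla u|^2 \leq C u$ must be improved by iterating the equation at $x_0$ — noting that the equation forces the trace of $D^2 u(x_0)$ to be zero, hence $D^2 u(x_0) = 0$ by positive semidefiniteness — which produces higher-order vanishing of $u$ and therefore a larger effective exponent in $\beta$, restoring the Osgood condition and permitting V\'azquez's principle to apply.
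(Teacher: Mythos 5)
Your high-level plan — localize to a region where the weights $|x|^\sigma,|x|^{-\tau}$ are comparable to constants and then invoke a Vazquez/Pucci--Serrin strong maximum principle — is the same as the paper's, and the open/closed dichotomy is standard bookkeeping. The substantive difference lies in how the gradient factor $|\nabla u|^m$ is handled, and there your argument has a genuine gap across part of the parameter range allowed by \eqref{GG}. The paper does not remove the gradient dependence at all: it verifies Pucci--Serrin's condition (B1) directly for $B(x,z,\boldsymbol{\xi})=-|x|^{-\tau}z^q|\boldsymbol{\xi}|^m$, exploiting the fact that (B1) allows a lower bound of the form $B\geq -\kappa\,\Phi(|\boldsymbol{\xi}|)-f(z)$ with $\Phi(t)=t\,A(t)=t^{p-1}$. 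Since $k=m+q-p+1>0$, one can choose $s>\max\{(p-1)/q,1\}$ with $ms'>p-1$ (H\"older conjugate $s'$), and then Young's inequality gives $z^q|\boldsymbol{\xi}|^m\leq z^{qs}/s+|\boldsymbol{\xi}|^{ms'}/s'\leq z^{qs}/s+|\boldsymbol{\xi}|^{p-1}/s'$ for $|\boldsymbol{\xi}|\leq 1$; the $|\boldsymbol{\xi}|^{p-1}$ piece is absorbed into $\kappa\Phi(|\boldsymbol{\xi}|)$, and $f(z)\sim z^{qs}$ with $qs>p-1$ satisfies the Osgood condition (F2). This works for all $m\geq 0$ compatible with $k>0$.

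Your route instead tries to eliminate the gradient by the Taylor bound $|\nabla u|^2\leq C_0 u$ near a zero of $u$ and then applies the gradient-free SMP to $\beta(u)=Cu^{q+m/2}$. The Taylor bound is fine, but the ensuing Osgood condition for the $p$-Laplacian requires the exponent in $\beta$ to be at least $p-1$, i.e.\ $q+m/2\geq p-1$, and this does \emph{not} follow from $k>0$: $k>0$ only gives $q+m>p-1$, leaving a deficit of $m/2$. For example $p>2$, $q=0$ and $p-1<m<2(p-1)$ is allowed by \eqref{GG} (with $\sigma,\tau$ tuned so that $\ell>0$ and $p\leq N+\sigma$) but violates $q+m/2\geq p-1$. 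Your proposed repair — deduce $D^2u(x_0)=0$ from the trace of the equation at $x_0$ and iterate to get higher vanishing — does not go through for general $p$: at a point where $\nabla u(x_0)=0=u(x_0)$, the $p$-Laplacian degenerates when $p>2$, so both sides of \eqref{le} vanish there and the equation places no constraint on $\operatorname{tr} D^2u(x_0)$; and when $p<2$ the pointwise divergence expansion is singular at such a point, so the trace identity you want is not available. The Young-inequality splitting used in the paper, which keeps the gradient dependence inside the $\kappa\Phi(|\boldsymbol{\xi}|)$ slack of (B1) rather than converting it into a power of $u$, is the step your proof is missing.
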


\begin{proof}
	Let $\varepsilon_0>0$ be small so that $B_{\varepsilon_0}(0)\subset \Omega$. For every $x\in \Omega\setminus\{0\}$, we can find 
	$R>\varepsilon$ and $\varepsilon\in (0,\varepsilon_0)$ such that $x\in \Omega_{R,\varepsilon}$, where $\Omega_{R,\varepsilon}:=(\Omega\cap B_R(0))\setminus \overline{B_\varepsilon(0)}$. Hence, the claim follows by checking 
	that the strong maximum principle holds in $\Omega_{R,\varepsilon}$ for every $R>\varepsilon$ and any $\varepsilon\in (0,\varepsilon_0)$.   
	We use \cite{Pucci}*{Theorem~5.4.1} for (5.4.1) on $\Omega_{R,\varepsilon}$, namely
	$$
	\partial_{x_j} \left\{ a_{ij}(x,u) A(|\nabla u|) \partial_{x_j}u\right\}+B(x,u,\nabla u)\leq 0
	$$
	with $a_{ij}(x,u)=|x|^\sigma\delta_{ij}$, where $\delta_{ij}$ denotes the Kronecker delta, $A(t)=t^{p-2}$ for any $ t\geq 0$ and 
	$$B(x,z,\boldsymbol{\xi})=-|x|^{-\tau} z^q|\boldsymbol{\xi}|^m \quad \text{for } x\in \Omega_{R,\varepsilon}, \ z\geq 0\ \text{ and } \boldsymbol{\xi}\in \mathbb R^N.$$ We have  $A\in C^1(\mathbb R^+)$ and $\lim_{t\searrow 0} tA'(t)/A(t)=p-2>-1$ so that (A1)' and (5.4.3) in \cite{Pucci} hold. 
	It is easy to check (A2) in \cite{Pucci}*{p.~3}. We also have (5.4.4) using \cite{Pucci}*{Remark 3, p.~117}.  It remains to check (B1) and (F2) in \cite{Pucci}*{p. 107}.  Since $k>0$ from \eqref{GG}, we can find $s$ such that 
	$ s>\max\{(p-1)/q,1\} $ and $ms'>p-1$, where $s'$ denotes the H\"older conjugate of $s$, that is $s':=s/(s-1)$. 
	By Young's inequality, we have 
	$$
	z^q|\boldsymbol{\xi}|^m \leq \frac{z^{qs}}{s}+\frac{|\boldsymbol{\xi}|^{ms'}}{s'} \leq \frac{z^{qs}}{s}+\frac{|\boldsymbol{\xi}|^{p-1}}{s'}\quad
	\text{for all } z\in \mathbb R^+\ \text{and } \boldsymbol{\xi}\in \mathbb R^N\ \text{with } |\boldsymbol{\xi}|\leq 1. 
	$$ 
	Hence, Condition (B1) holds with $\Phi(|\boldsymbol{\xi}|)=
	|\boldsymbol{\xi}|A(|\boldsymbol{\xi}|)=|\boldsymbol{\xi}|^{p-1}$, $\kappa=\max(R^{-\tau},\varepsilon^{-\tau})/s'$ and $f(z)=(\max(R^{-\tau},\varepsilon^{-\tau})/s) \, z^{qs}$ satisfying 
	(F2).  We can now apply Theorem~5.4.1 in \cite{Pucci} to conclude the proof of Lemma~\ref{smp11}.
	\end{proof}

In the rest of this section, we work in the framework and notation of Lemma~\ref{lm1}. 
Our main aim is to prove Lemma~\ref{lm11}, which gives an estimate from above for $\mathcal{L}_j[w_j]$, where
the operator $\mathcal{L}_j[w]$ and $w_j$ are defined in \eqref{L1} and \eqref{wj12}, respectively. An important ingredient is Lemma~\ref{lm12} in which we 
evaluate $\mathcal{L}_j[w_j]$, see \eqref{ch4aa}. For this purpose, we introduce the following. 

\vspace{0.2cm}
{\bf Notation.} 
We define $\mathcal Q_j(x)$, $\Theta_j(x)$ and $\mathcal{X}_j(x)$ for $j=1,2$ and $x\in \omega$ as follows:
\begin{equation} \label{qxe} \left\{ \begin{aligned}
&{\mathcal Q}_j(x):=(\sigma+\tau) \frac{\langle x,\nabla u_j \rangle}{|x|^2}+\alpha_j \left(m+2-p\right)\frac{\langle \nabla u_j,\nabla \phi\rangle}{\phi},\\
& \Theta_j(x):=\Psi_j(x)-4\alpha_j \frac{ \langle \nabla \phi, \nabla z_j \rangle }{\phi},\ 
\text{where } \Psi_j:= 
-\frac{2\alpha_j\,z_j}{\phi}\left( (2 \alpha_j - 1)\frac{|\nabla \phi|^2}{\phi}+ \Delta \phi \right),\\
& {\mathcal X}_j(x):=\Theta_j- M \frac{\langle \nabla z_j, \nabla w_j \rangle}{z_j \phi^{2\alpha_j}}+ \frac{a_{2,w_j}+a_{3,j,w_j}}{\phi^{2 \alpha_j}}
- 2 \,\langle \nabla \left(a_{2,j}+ a_{3,j}\right), \nabla u_j\rangle.
\end{aligned} \right.
\end{equation}

\begin{lemma} \label{lm12}
	Let  \eqref{GG} hold.  Then, for every $x\in \omega$ and $j=1,2$, the following holds:
	\begin{equation} \label{ch4aa}
	\mathcal{L}_j[w_j]= \left[ \mathcal X_j -2|D^2 u_j|^2+ 4 \alpha_j a_{0,j}   \frac{\langle \nabla u_j,\nabla \phi\rangle}{\phi}
	+ 2\left(\mathcal Q_j-\frac{h_j'(u_j)}{h_j(u_j)} z_j\right) a_{1,j}
	\right] \phi^{2 \alpha_j},
	\end{equation} 
	where $a_{0,j}$ and $a_{1,j}$ are given in \eqref{a01}. 
\end{lemma}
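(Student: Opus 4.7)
\emph{Proof proposal.} The plan is to substitute $w_j = \phi^{2\alpha_j} z_j$ directly into the definition of $\mathcal L_j[w]$ in \eqref{L1}, expand every piece via the product and chain rules, and reorganise the outcome so that the four constituents of $\mathcal X_j$ in \eqref{qxe} emerge together with the Hessian term $-2|D^2 u_j|^2$, the $\nabla\phi$-coupling $4\alpha_j a_{0,j}\langle\nabla u_j,\nabla\phi\rangle/\phi$, and the term $2 a_{1,j}[\mathcal Q_j - h_j'(u_j) z_j/h_j(u_j)]$.

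First, I would compute
\begin{equation*}
\nabla w_j = 2\alpha_j \phi^{2\alpha_j-1} z_j \nabla\phi + \phi^{2\alpha_j}\nabla z_j
\end{equation*}
and
\begin{equation*}
\Delta w_j = \phi^{2\alpha_j}\Delta z_j + 4\alpha_j \phi^{2\alpha_j-1}\langle\nabla\phi,\nabla z_j\rangle + 2\alpha_j z_j\bigl[\phi^{2\alpha_j-1}\Delta\phi + (2\alpha_j-1)\phi^{2\alpha_j-2}|\nabla\phi|^2\bigr],
\end{equation*}
so that $-\Delta w_j = \phi^{2\alpha_j}\bigl(\Theta_j - \Delta z_j\bigr)$, with $\Theta_j$ and $\Psi_j$ as in \eqref{qxe}. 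The Bochner identity $\Delta z_j = 2|D^2 u_j|^2 + 2\langle\nabla u_j,\nabla \Delta u_j\rangle$, combined with $\Delta u_j = \sum_{i=0}^{3} a_{i,j}$ from \eqref{vee}, rewrites $-\Delta w_j$ as $\phi^{2\alpha_j}\Theta_j - 2\phi^{2\alpha_j}|D^2 u_j|^2 - 2\phi^{2\alpha_j}\sum_{i=0}^{3}\langle\nabla u_j,\nabla a_{i,j}\rangle$.

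Next, I would expand the remaining ingredients of $\mathcal L_j[w_j]$ using the above formula for $\nabla w_j$, and pair them with the Bochner couplings according to the index $i$. For $i=0$, since $\nabla a_{0,j} = (j-1)(p-1)\nabla z_j$, the coupling $-2(j-1)(p-1)\phi^{2\alpha_j}\langle\nabla u_j,\nabla z_j\rangle$ cancels against the $\phi^{2\alpha_j}$-part of $2 a_{0,j,w_j}$, leaving only the clean residual $4\alpha_j a_{0,j}\phi^{2\alpha_j-1}\langle\nabla u_j,\nabla\phi\rangle$. For $i=1$, the chain rule applied to $a_{1,j} = h_j(u_j) f(x, z_j)$ with $f(x, t) = |x|^{-\tau-\sigma} t^{(m+2-p)/2}$ gives
\begin{equation*}
\nabla a_{1,j} = \frac{h_j'(u_j)}{h_j(u_j)} a_{1,j} \nabla u_j - (\sigma + \tau)\frac{a_{1,j}}{|x|^2}\, x + \frac{m+2-p}{2}\, \frac{a_{1,j}}{z_j}\nabla z_j,
\end{equation*}
and pairing $-2\phi^{2\alpha_j}\langle\nabla u_j,\nabla a_{1,j}\rangle$ with the $\phi^{2\alpha_j}\langle\nabla u_j,\nabla z_j\rangle/z_j$-part of $(m+2-p) a_{1,j,w_j}$ annihilates the $\nabla z_j$-coupling; the surviving terms reassemble, by the definition of $\mathcal Q_j$, into exactly $2\phi^{2\alpha_j} a_{1,j}\bigl[\mathcal Q_j - h_j'(u_j) z_j/h_j(u_j)\bigr]$.

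Finally, the term $-M\langle\nabla z_j, \nabla w_j\rangle/z_j$ appearing in $\mathcal L_j[w_j]$, the terms $a_{2,w_j} + a_{3,j,w_j}$ inherited from $\mathcal A_j[w_j]$, and the leftover Bochner couplings $-2\phi^{2\alpha_j}\langle\nabla u_j,\nabla(a_{2,j} + a_{3,j})\rangle$ are, after factoring out $\phi^{2\alpha_j}$, precisely the four quantities whose sum defines $\phi^{2\alpha_j}\mathcal X_j$ in \eqref{qxe}; collecting everything produces \eqref{ch4aa}. The main obstacle is purely bookkeeping: the composite dependence of $a_{1,j}$ on $x$ (explicitly through the weight and implicitly through $u_j(x)$ and $|\nabla u_j(x)|^2$) generates several terms that must be matched carefully with the expansions of $2 a_{0,j,w_j}$ and $(m+2-p) a_{1,j,w_j}$, while the auxiliary term $-M\langle\nabla z_j,\nabla w\rangle/z_j$ built into $\mathcal L_j$ is simply parked inside $\mathcal X_j$ for later processing in Lemma~\ref{lm11}.
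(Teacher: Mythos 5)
Your proposal is correct and follows essentially the same route as the paper's proof: expand $-\Delta w_j$ via the product rule and the Bochner identity combined with \eqref{vee}, pair the Bochner couplings $-2\langle\nabla a_{i,j},\nabla u_j\rangle$ for $i=0,1$ with the expansions of $2a_{0,j,w_j}$ and $(m+2-p)a_{1,j,w_j}$ through the formula for $\nabla w_j$, and absorb the $M$-term, $a_{2,w_j}+a_{3,j,w_j}$, and the $i=2,3$ couplings into $\mathcal X_j$. The cancellations and reassembly into $\mathcal Q_j$ and $-h_j'(u_j)z_j/h_j(u_j)$ check out exactly as you describe (these are the quantities the paper denotes $\mathcal E_{0,j}$ and $\mathcal E_{1,j}$).
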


\begin{proof} By the definition of $\mathcal{L}_j[w]$ in \eqref{L1}, we have
	\begin{equation} \label{LL1}
	\mathcal{L}_j[w_j]=-\Delta w_j - M \frac{\langle \nabla z_j, \nabla w_j \rangle}{z_j}+2a_{0,j,w_j}+(m+2-p)a_{1,j,w_j}+a_{2,w_j}+a_{3,j,w_j}.
	\end{equation}
	Since $ \Delta z_j = 2|D^2 u_j|^2 + 2\langle \nabla (\Delta u_j), \nabla u_j \rangle$, by using \eqref{vee}, we find that
	\begin{equation}\label{eo1}
	\Delta z_j = 2 |D^2 u_j|^2  + 2\sum^3_{i=0} \langle \nabla a_{i,j}, \nabla u_j\rangle \quad \mbox{for all } x \in \omega.
	\end{equation}
	Recall that $w_j =\phi^{2\alpha_j} z_j$. Hence, by the product rule and \eqref{eo1}, for all $x \in \omega$, we obtain that 
	\begin{equation} \label{eo}
	-\Delta w_j =  \left(\Theta_j -\Delta z_j\right)\phi^{2 \alpha_j} =\left(\Theta_j -2|D^2 u_j|^2-2 \sum^3_{i=0} \langle \nabla a_{i,j}, \nabla u_j\rangle  \right)\phi^{2 \alpha_j}. 
	\end{equation}
	Using \eqref{eo} in \eqref{LL1}, we get that
	\begin{equation} \label{mle1}
	\mathcal{L}_j[w_j] = \left(\mathcal X_j -2 |D^2 u_j|^2+ \mathcal  E_{0,j}+ \mathcal  E_{1,j}
	\right) \phi^{2 \alpha_j} \quad \text{in } \omega,
	\end{equation}
	where $ \mathcal  E_{0,j}$ and $\mathcal  E_{1,j}$ are defined in $\omega$ as follows
	\begin{equation} \label{eij} 
	\mathcal E_{0,j}:= 2\frac{ a_{0,j,w_j}}{\phi^{2 \alpha_j}}- 2 \langle \nabla a_{0,j}, \nabla u_j\rangle\ \text{and }  \mathcal E_{1,j}:= (m+2-p)\,\frac{ a_{1,j,w_j}}{\phi^{2 \alpha_j}}- 2 \langle \nabla a_{1,j}, \nabla u_j\rangle.
	\end{equation}
	We now evaluate the terms $\mathcal E_{0,j}$ and $\mathcal E_{1,j}$ for $j=1,2$. We use that 
	\begin{equation}\label{abi0}
	\langle \nabla u_j, \nabla w_j \rangle =\left( 2 \alpha_j  z_j \frac{\langle \nabla u_j, \nabla \phi \rangle}{\phi} + \langle \nabla z_j, \nabla u_j\rangle\right) \phi^{2 \alpha_j} \quad \mbox{in } \omega.
	\end{equation}
	Hence, for every $x\in \omega$, we have
	\begin{equation} \label{b0j}
	\mathcal E_{0,j}(x)= 4 \alpha_j a_{0,j}  \frac{\langle \nabla u_j, \nabla \phi \rangle}{\phi} \quad \text{and}\quad
	\mathcal E_{1,j} (x) =
	2\left(\mathcal Q_j-\frac{h_j'(u_j)}{h_j(u_j)} z_j\right) a_{1,j} .
	\end{equation}
	Using \eqref{b0j} into \eqref{mle1}, we reach \eqref{ch4aa}. This ends the proof of Lemma~\ref{lm12}.  \end{proof}

\begin{lemma} \label{lm13}
	Let  \eqref{GG} hold. For every $M>3|p-2|/2$, there exist positive constants $\beta_{1,j}$ and $\beta_{2,j}$, 
	depending on $m,N,p,q,\sigma$ and $M$ such that \begin{equation} \label{josh}
	\mathcal X_j(x)\leq \beta_{1,j} \frac{z_j}{|x|^2 \phi}-\beta_{2,j} \,\frac{|\nabla z_j|^2}{z_j}\quad \text{for all } x\in \omega \ \mbox{and } j=1,2.
	\end{equation}  
\end{lemma}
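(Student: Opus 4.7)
The strategy is to expand each of the four pieces of $\mathcal X_j$ using the definitions in \eqref{op}--\eqref{qxe}, exploit two crucial cancellations among the highest-order derivative terms, and bound what remains by Cauchy--Schwarz, Young's inequality and the pointwise bounds \eqref{c'} for $\phi$. Throughout the argument I use that $\rho_0/2 \leq |x| \leq 3\rho_0/2$ on $\omega$, so $\rho_0^{-2}$ may be freely exchanged with $|x|^{-2}$, and that $\phi\leq 1$ allows one to enlarge $\phi^{-s}$ to $\phi^{-1}$ whenever $0 < s \leq 1$.

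The first step is the identity, obtained by differentiating $w_j = \phi^{2\alpha_j} z_j$, that
\begin{equation*}
\frac{\langle \nabla z_j, \nabla w_j\rangle}{z_j\phi^{2\alpha_j}} = \frac{|\nabla z_j|^2}{z_j} + 2\alpha_j\,\frac{\langle\nabla z_j,\nabla\phi\rangle}{\phi}.
\end{equation*}
The $-M$ prefactor in $\mathcal X_j$ thus supplies the decisive negative contribution $-M |\nabla z_j|^2 / z_j$. The residual cross term, together with the $-4\alpha_j \langle \nabla\phi,\nabla z_j\rangle/\phi$ coming from $\Theta_j$, is handled by Young's inequality: for every $\varepsilon>0$,
\begin{equation*}
\Bigl|\frac{\langle\nabla z_j,\nabla\phi\rangle}{\phi}\Bigr| \leq \varepsilon\,\frac{|\nabla z_j|^2}{z_j} + \frac{1}{4\varepsilon}\,\frac{z_j\,|\nabla\phi|^2}{\phi^2}\leq \varepsilon\,\frac{|\nabla z_j|^2}{z_j} + \frac{(c')^2}{4\varepsilon}\,\frac{z_j}{\phi|x|^2}.
\end{equation*}
The factor $\Psi_j$ is bounded by a constant multiple of $z_j/(\phi|x|^2)$ directly from \eqref{c'}, with the mild simplification that $2\alpha_2-1=0$ when $j=2$.

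The genuinely delicate point is the combination $\phi^{-2\alpha_j}a_{3,j,w_j}- 2\langle\nabla a_{3,j},\nabla u_j\rangle$. Using $\nabla z_j = 2(D^2 u_j)(\nabla u_j)$, a direct computation yields the identity
\begin{equation*}
\langle\nabla\langle\nabla z_j,\nabla u_j\rangle,\nabla u_j\rangle = \langle(D^2 z_j)(\nabla u_j),\nabla u_j\rangle + \tfrac{1}{2}|\nabla z_j|^2,
\end{equation*}
which causes the dangerous third-order expression $(p-2)\langle(D^2z_j)(\nabla u_j),\nabla u_j\rangle/z_j$ to cancel exactly between the two pieces. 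What survives is a combination of terms of the form $z_j|\nabla\phi|^2/\phi^2$, $z_j|D^2\phi|/\phi$, $\langle\nabla\phi,\nabla u_j\rangle\langle\nabla z_j,\nabla u_j\rangle/(\phi z_j)$, $|\nabla z_j|^2/z_j$ and $\langle\nabla z_j,\nabla u_j\rangle^2/z_j^2$. By Cauchy--Schwarz $\langle\nabla z_j,\nabla u_j\rangle^2\leq z_j|\nabla z_j|^2$, so the last two are jointly dominated by $\tfrac{3|p-2|}{2}|\nabla z_j|^2/z_j$, while the mixed term contributes an additional $\varepsilon|\nabla z_j|^2/z_j+O(1/\varepsilon)\,z_j/(\phi|x|^2)$ by Young's inequality. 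A parallel but lighter calculation for $\phi^{-2\alpha_j}a_{2,w_j}- 2\langle\nabla a_{2,j},\nabla u_j\rangle$ shows that the $\sigma\langle x,\nabla z_j\rangle/|x|^2$ contributions cancel, leaving only terms bounded by $z_j/(\phi|x|^2)$ after invoking $(x\cdot\nabla u_j)^2\leq |x|^2 z_j$.

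Assembling these bounds, the coefficient of $|\nabla z_j|^2/z_j$ in the resulting upper bound for $\mathcal X_j$ equals $-M + \tfrac{3|p-2|}{2} + O(\varepsilon)$, which is strictly negative as soon as $M > 3|p-2|/2$ and $\varepsilon$ is chosen suitably small; this fixes a positive $\beta_{2,j}$ depending only on $m,N,p$ and $M$. All remaining contributions are of the form (constant)$\,\times\,z_j/(\phi|x|^2)$, with the constant depending only on $m,N,p,q,\sigma$ and $M$, producing $\beta_{1,j}$ and establishing \eqref{josh}. The main obstacle is the bookkeeping of many second- and third-order derivatives; the structural observation driving the proof is that the factor $1/2$ in the definition of $a_{3,j}$ in \eqref{a01} is precisely calibrated so that the $D^2 z_j$ terms cancel, while the threshold $M>3|p-2|/2$ is dictated by the two surviving $|\nabla z_j|^2/z_j$ contributions.
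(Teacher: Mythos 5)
Your proposal is correct and follows essentially the same route as the paper: you use the same decomposition of $\mathcal X_j$ into $\Theta_j - M\langle\nabla z_j,\nabla w_j\rangle/(z_j\phi^{2\alpha_j})$ plus $\mathcal E_{2,j}$ and $\mathcal E_{3,j}$, identify the same two structural cancellations (the $\langle(D^2 z_j)(\nabla u_j),\nabla u_j\rangle$ term between the two pieces of $\mathcal E_{3,j}$, and the $\sigma\langle x,\nabla z_j\rangle/|x|^2$ term in $\mathcal E_{2,j}$), extract the decisive $-M|\nabla z_j|^2/z_j$ from the $M$-term via $\phi^{-2\alpha_j}\nabla w_j/z_j = \nabla z_j/z_j + 2\alpha_j\nabla\phi/\phi$, and close by Cauchy--Schwarz and Young with $\varepsilon$, arriving at the same threshold $M>3|p-2|/2$.
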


\begin{proof} Let $c'>0$ be as in  \eqref{c'}. For any $M>3|p-2|/2$, we fix $\varepsilon \in (0,\min_{j=1,2}\left(3\alpha_jc'\right)^{-1})$ such that $\beta_{2,j}>0$ for $j=1,2$, where we define
	\begin{equation} \label{beta2} \beta_{2,j}:=(1-3\alpha_j c'\varepsilon)M-6\alpha_jc'(1+|p-2|)\varepsilon-3|p-2|/2.
	\end{equation} 
	
	\vspace{0.1cm}
	{\bf Claim:} {\em There exists $\bar c_j>0$ depending only on $m,N,p,q,\sigma$ such that for all $x\in \omega$
		\begin{equation} \label{geee}  \mathcal X_j(x)\leq \bar{c}_j \frac{z_j}{|x|^2 \phi} -\left(M-\frac{3|p-2|}{2}\right)\frac{|\nabla z_j|^2}{z_j}+
		2\alpha_j(M+2+2|p-2|) \frac{|\nabla z_j| |\nabla \phi|}{\phi}.
		\end{equation}}
 
	Assume that the Claim has been proved. 
	From \eqref{c'} and Young's inequality with $\varepsilon$, we get
	\begin{equation} \label{zj1}
	\frac{ |\nabla z_j||\nabla \phi|}{\phi} \leq \frac{3c'}{2}  \frac{ |\nabla z_j| }{|x|\phi^{1/2}} 
	\leq \frac{3c'}{2} \left( \varepsilon  \frac{|\nabla z_j|^2}{z_j} + \frac{1}{4 \varepsilon} \frac{z_j}{|x|^2\phi}\right)\quad \mbox{for every } x \in \omega.
	\end{equation}
	Using \eqref{zj1} into \eqref{geee}, we reach \eqref{josh} with $\beta_{2,j}$ given by \eqref{beta2} and $\beta_{1,j}$ defined by 
	\begin{equation} \label{beta1}
	\beta_{1,j}:=\overline{c}_j+3\alpha_jc'(M+2+2|p-2|)/(4\varepsilon).
	\end{equation}
	
	{\em Proof of Claim.} If we define $ \mathcal  E_{2,j}$ and $\mathcal  E_{3,j}$ by 
	$$  \mathcal E_{2,j}:= \frac{a_{2,w_j}}{\phi^{2 \alpha_j}}- 2 \langle \nabla a_{2,j}, \nabla u_j\rangle\quad
	\text{ and }\quad  \mathcal E_{3,j}:= \frac{a_{3,j,w_j}}{\phi^{2 \alpha_j}}- 2 \langle \nabla a_{3,j}, \nabla u_j\rangle, 
	$$
	then the definition of $\mathcal X_j$ in \eqref{qxe} yields that 
	\begin{equation} \label{xdef}
	\mathcal X_j=\Theta_j - M \frac{\langle \nabla z_j, \nabla w_j \rangle}{z_j \phi^{2\alpha_j}}+\mathcal E_{2,j}+\mathcal E_{3,j}. 
	\end{equation}
	
	From  \eqref{xdef}, we will derive \eqref{geee} by bounding from above $\Theta_j - (M \langle \nabla z_j, \nabla w_j \rangle)/(z_j \phi^{2\alpha_j})$, as well
	as $\mathcal E_{2,j}$ and $\mathcal E_{3,j}$ 
	in \eqref{fir}, \eqref{c''} and \eqref{e3j}, respectively.
	
	\vspace{0.1cm}
	By the definition of $\Theta_j$ in \eqref{qxe} and 
	$(1/z_j) \phi^{-2\alpha_j}  \nabla w_j= (1/z_j)\nabla z_j+(2\alpha_j/\phi) \nabla \phi$, we find that
	\begin{equation} \label{b9j}
	\Theta_j-M\frac{\langle \nabla z_j, \nabla w_j \rangle}{z_j \phi^{2\alpha_j}} = 
	\Psi_j
	-M  \frac{|\nabla z_j|^2}{z_j}-2 \alpha_j (M+2) \frac{ \langle \nabla z_j, \nabla \phi\rangle}{\phi} \quad \text{in }\omega.
	\end{equation}
	From \eqref{c'} and $ |\Delta \phi|\leq \sqrt{N} |D^2 \phi| $, we have $|\Delta \phi| \leq 9\sqrt{N}c' /(4|x|^2)$ in $\omega$. 
	Using
	\eqref{b9j} and denoting $\widehat c_j:=(9\alpha_j c' /2)\left(\sqrt{N}+c'|2\alpha_j-1|\right)$, we arrive at 
	\begin{equation} \label{fir}
	\Theta_j-M\frac{\langle \nabla z_j, \nabla w_j \rangle}{z_j \phi^{2\alpha_j}}\leq 
	\widehat c_j\frac{z_j}{|x|^2\phi}
	-M  \frac{|\nabla z_j|^2}{z_j}+2 \alpha_j (M+2) \frac{| \nabla z_j| |\nabla \phi |}{\phi}.  
	\end{equation}
	
	Next, we will bound $ \mathcal  E_{2,j}$ and $\mathcal  E_{3,j}$ from above. 
	Using the identity $$ \left\langle \nabla \left( \frac{\langle x, \nabla u_j\rangle}{|x|^2}\right), \nabla u_j\right\rangle = \frac{z_j}{|x|^2} + \frac{\langle x, \nabla z_j\rangle}{2 |x|^2} - \frac{2 \langle x, \nabla u_j \rangle^2}{|x|^4},$$ we obtain that
	\begin{equation} \label{b2j}
	\mathcal E_{2,j}  (x)
	=2 \sigma\left[ \left(1 - 2 \frac{\langle x, \nabla u_j\rangle^2}{|x|^2 z_j} \right)\phi-  \alpha_j \langle x, \nabla \phi\rangle \right] \frac{ z_j}{|x|^2\phi}.
	\end{equation}
	Therefore, using the constant $c'$ in \eqref{c'}, we find that 
	\begin{equation} \label{c''}
	\mathcal E_{2,j}  (x)\leq 3 |\sigma| \left(2+c'\alpha_j\right)\frac{z_j}{|x|^2\phi}\quad \text{for every } x\in \omega.
	\end{equation}
	Now, using the identity that $$  D^2 (\xi \zeta ) \equiv \xi D^2 \zeta + \zeta D^2 \xi + (\nabla \zeta)(\nabla \xi)^\intercal + (\nabla \xi)(\nabla \zeta)^\intercal $$  for any two $\xi,\zeta$ twice continuously differentiable functions, we find that
	$$
	D^2 w_j = \phi^{2 \alpha_j} D^2 z_j + z_j D^2(\phi^{2 \alpha_j}) + (\nabla \phi^{2 \alpha_j}) (\nabla z_j)^{\intercal} + (\nabla z_j)(\nabla \phi^{2 \alpha_j})^{\intercal} \quad \mbox{in } \omega.
	$$
	Since the chain rule implies that $$ D^2 (\phi^{2 \alpha_j}) = 2 \alpha_j (2 \alpha_j - 1) \phi^{2 \alpha_j - 2} (\nabla \phi)(\nabla \phi)^{\intercal} + 2 \alpha_j \phi^{2 \alpha_j - 1} D^2 \phi,$$ we arrive at
	\begin{equation} \label{b3jaa}
	\frac{a_{3,j,w_j}}{\phi^{2\alpha_j}} =a_{3,j,z_j}-4\alpha_j(p-2) \frac{ \langle \nabla u_j, \nabla \phi \rangle
		\langle \nabla u_j, \nabla z_j \rangle}{z_j \phi}+\Upsilon_j,
	\end{equation}
	where we define $\Upsilon_j$ by
	$$ \Upsilon_j:=
	-\frac{2\alpha_j(p-2) }{\phi}\left[ \langle(D^2 \phi)(\nabla u_j), \nabla u_j\rangle 
	+  (2 \alpha_j - 1) \frac{\langle \nabla \phi, \nabla u_j \rangle^2}{\phi} \right] .
	$$
	In view of \eqref{c'}, by taking $c''_j=9\alpha_j |p-2|c' (1+c'|2\alpha_j-1|)/2$, we have
	\begin{equation} \label{ups} \Upsilon_j(x)\leq c''_j\frac{z_j}{|x|^2 \phi}\quad \text{for all } x\in \omega.  
	\end{equation}
	Using the identity that  $$ \left\langle \nabla \left( \frac{\langle \nabla z_j, \nabla u_j \rangle}{z_j}\right), \nabla u_j \right\rangle = 
	- \frac{\langle \nabla z_j, \nabla u_j\rangle^2}{z_j^2} + \frac{|\nabla z_j|^2}{2z_j} + \frac{\langle (D^2 z_j)(\nabla u_j), \nabla u_j \rangle}{z_j},$$
	for all $x \in \omega$, we find that
	\begin{equation} \label{b3j}
	-2\langle \nabla a_{3,j},\nabla u_j\rangle 
	=-a_{3,j,z_j}+(p-2)\left( \frac{1}{2}-\frac{\langle \nabla z_j, \nabla u_j \rangle^2}{z_j |\nabla z_j|^2} \right)\frac{|\nabla z_j|^2}{z_j}.
	\end{equation}
	By adding \eqref{b3jaa} and \eqref{b3j}, then using \eqref{ups}, we infer that 
	\begin{equation} \label{e3j}
	\mathcal E_{3,j}(x)\leq \frac{3|p-2|}{2} \frac{|\nabla z_j|^2}{z_j}+c''_j\frac{z_j}{|x|^2 \phi} +4\alpha_j |p-2| \frac{ |\nabla \phi | |\nabla z_j|}{\phi}
	\quad \text{for all } x\in \omega.
	\end{equation}
	If $\overline{c}_j:=\widehat c_j+3|\sigma|(2+c'\alpha_j)+c_j''$, then by adding \eqref{fir}, \eqref{c''} and \eqref{e3j}, we conclude \eqref{geee}. 
	This proves the Claim, thus completing the proof of Lemma~\ref{lm13}.  
\end{proof}

\begin{lemma} \label{lm11}
	Let  \eqref{GG} hold. Fix $\theta \in \mathbb{R}$ such that $0<\theta<\min \left(k/(2(p-1)),1/N \right)$. There exists a positive constant $M=M(m,N,p,q,\sigma, \tau)$ such that for $i=0,1,2,3$  we can find positive constants $d_i = d_i(m,N,p,q,\sigma, \tau)$ so that for $j=1,2$, we have
	\begin{equation} \label{up2}
	\mathcal{L}_j[w_j]\leq \left[-d_0(j-1)  z_j^2+d_1 \frac{z_j}{\phi |x|^2}-d_2  a_{1,j}^2(x)-d_3 \frac{|\nabla z_j|^2}{z_j} - 2\mathcal{W}_j(x) \right] \phi^{2 \alpha_j},
	\end{equation}
	for every $x\in \omega$ and $\mathcal{W}_j$ is given by
	\begin{equation}
	\mathcal{W}_j(x):= h_j' (u_j) z_j f(x, z_j) + 2 \theta a_{0,j}(x) \,a_{1,j}(x)  \quad \mbox{for every } x \in \omega.
	\end{equation}
\end{lemma}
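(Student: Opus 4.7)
The plan is to combine the identity of Lemma~\ref{lm12} with the $\mathcal{X}_j$-bound of Lemma~\ref{lm13}, then use the elementary inequality $|D^2 u_j|^2 \geq N^{-1}(\Delta u_j)^2$ together with the PDE \eqref{vee} to extract the required negative squares on the right-hand side of \eqref{up2}, and mop up the remaining cross terms with Young's inequality. I would fix $M$ first, sufficiently large, and then arrange the various Young parameters according to the smallness hypotheses on $\theta$ so that the arithmetic closes.

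The preliminary observation is that the $\mathcal{W}_j$-term in \eqref{up2} is there precisely to absorb a piece of the expression coming from Lemma~\ref{lm12}. Since $a_{1,j}=h_j(u_j)f(x,z_j)$, we have $\tfrac{h_j'(u_j)}{h_j(u_j)}z_j\cdot a_{1,j}=h_j'(u_j)z_j f(x,z_j)$, which is exactly the first summand of $\mathcal{W}_j$. Transferring this to the right-hand side, and applying Lemma~\ref{lm13} to dispose of $\mathcal{X}_j$, reduces \eqref{up2} to showing
\begin{equation*}
-2|D^2 u_j|^2 + 4\alpha_j a_{0,j}\,\frac{\langle \nabla u_j,\nabla \phi\rangle}{\phi} + 2\mathcal{Q}_j\, a_{1,j} + 4\theta\, a_{0,j}\, a_{1,j} \leq -d_0(j-1)z_j^2 + \tilde{d}_1\frac{z_j}{|x|^2\phi} - d_2\, a_{1,j}^2 - \tilde{d}_3 \frac{|\nabla z_j|^2}{z_j},
\end{equation*}
where $\tilde{d}_1,\tilde{d}_3>0$ are new constants that, together with the $\beta_{1,j},\beta_{2,j}$ from Lemma~\ref{lm13}, recover $d_1,d_3$ of \eqref{up2}; choosing $M$ large enough makes $\beta_{2,j}$ dominate the positive $|\nabla z_j|^2/z_j$ contributions produced below.

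The computational core is the Cauchy--Schwarz bound $-2|D^2 u_j|^2 \leq -\tfrac{2}{N}\bigl(\sum_{i=0}^{3} a_{i,j}\bigr)^2$. Expanding, the diagonal squares furnish exactly the structure we need: $\tfrac{2}{N} a_{1,j}^2$ feeds $-d_2 a_{1,j}^2$, and for $j=2$ the diagonal $\tfrac{2}{N}a_{0,2}^2 = \tfrac{2(p-1)^2}{N}z_2^2$ feeds $-d_0 z_2^2$; by \eqref{a01}, the diagonals $\tfrac{2}{N} a_{2,j}^2\lesssim z_j/|x|^2$ and $\tfrac{2}{N}a_{3,j}^2\lesssim |\nabla z_j|^2/z_j$ are innocuous. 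Cross products whose indices lie in $\{2,3\}$ or couple $\{2,3\}$ with $\{0,1\}$ are dispatched by standard Young splits into the same four buckets. The linear-in-$a_{1,j}$ term $2\mathcal{Q}_j a_{1,j}$ is treated via $2\mathcal{Q}_j a_{1,j}\leq \varepsilon a_{1,j}^2+\varepsilon^{-1}\mathcal{Q}_j^2$, with $\mathcal{Q}_j^2 \lesssim z_j/(|x|^2\phi)$ by \eqref{qxe} and \eqref{c'}; the boundary correction $4\alpha_j a_{0,j}\langle\nabla u_j,\nabla\phi\rangle/\phi$, nontrivial only for $j=2$ and of size $\lesssim z_2^{3/2}|\nabla\phi|/\phi$, splits similarly as $\varepsilon z_2^2+C_\varepsilon z_2/(|x|^2\phi)$ via \eqref{c'}.

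The genuine obstacle is the cross term $\tfrac{4}{N}a_{0,2}a_{1,2}$ for $j=2$: since $a_{0,2}>0$ while $a_{1,2}=-C_0^{-k}e^{-k u_2}f(x,z_2)<0$, its contribution $-\tfrac{4}{N}a_{0,2}a_{1,2}$ to the left-hand side is positive and cannot be dispatched by a naive Young split without eating the very squares just extracted. The added LHS term $4\theta\, a_{0,2}a_{1,2}$ is negative and produces the partial cancellation $(4\theta-\tfrac{4}{N})a_{0,2}a_{1,2}$; the hypothesis $\theta<1/N$ is exactly what makes the remaining positive residual amenable to Young's inequality inside fractions of the $-d_0 z_2^2$ and $-d_2 a_{1,2}^2$ reservoirs. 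The second threshold $\theta<k/(2(p-1))$ plays no role in \eqref{up2} per se; rather, a short computation gives $\mathcal{W}_2 = C_0^{-k} e^{-k u_2} z_2 f(x,z_2)\bigl[k-2\theta(p-1)\bigr]$, and this threshold ensures $\mathcal{W}_j\geq 0$, which is what the forthcoming Remark~\ref{remark2} needs to pass from \eqref{up2} to the cleaner form \eqref{up} used in Lemma~\ref{lm1}. Thus the main conceptual difficulty is the handling of this single mixed-sign cross term, and the $\theta$-parameter is precisely the device engineered to route it cleanly.
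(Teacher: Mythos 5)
Your proposal is correct and follows essentially the same route as the paper: combine Lemma~\ref{lm12} with Lemma~\ref{lm13}, lower-bound $|D^2 u_j|^2$ by $(\Delta u_j)^2/N$ via \eqref{vee}, expand $\big(\sum_i a_{i,j}\big)^2$, and absorb the cross terms by Young; the $\mathcal{W}_j$-transfer and the role of $\theta<1/N$ are exactly as you describe, and your formula for $\mathcal{W}_2$, together with the observation that $\theta<k/(2(p-1))$ enters only to ensure $\mathcal{W}_j\geq 0$ (Remark~\ref{remark2}), is right. The one algebraic variant: the paper first uses $\theta\leq 1/N$ to pass to $|D^2 u_j|^2-2\theta a_{0,j}a_{1,j}\geq \theta\big((\sum_i a_{i,j})^2-2a_{0,j}a_{1,j}\big)$, so the $a_{0,j}a_{1,j}$ cross term cancels identically in \eqref{in3}, whereas you retain the $1/N$ coefficient and absorb the residual $4(\theta-1/N)a_{0,j}a_{1,j}$ by Young into the $a_{0,j}^2$ and $a_{1,j}^2$ squares; both close. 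One small caution on ordering: fixing $M$ first does not quite work, since $d_{3,j}=\beta_{2,j}-\theta(p-2)^2/(2\varepsilon)$ blows up as the Young parameter $\varepsilon$ shrinks; the paper's order in \eqref{edeff}--\eqref{mdeff} (fix $\varepsilon$ small enough that $d_0,d_{2,j}>0$, then take $M$ large depending on $\varepsilon$) is the one that makes all the constants depend only on the stated data.
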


\begin{proof}
	Recall that $\alpha_1=1/(2k)$, $\alpha_2=1/2$, and $\beta_{1,j},\beta_{2,j}$ for $j=1,2$ are given respectively by \eqref{beta1} and \eqref{beta2}.   We fix $\varepsilon>0$ small (see \eqref{edeff}) and $M> 3 |p-2|/2$ large (see \eqref{mdeff}) both depending only on $m,N,p,q,\sigma,\tau$.  From Lemmas~\ref{lm12} and \ref{lm13}, using \eqref{josh} in \eqref{ch4aa}, we find that
	\begin{equation} \label{lm98}
	\frac{\mathcal{L}_j[w_j]}{\phi^{2 \alpha_j}}\leq \beta_{1,j} \frac{z_j}{|x|^2 \phi}-\beta_{2,j} \,\frac{|\nabla z_j|^2}{z_j} - 2 \left( |D^2 u_j|^2 - 2 \theta a_{0,j} a_{1,j}\right) + \mathcal Z_j - 2 \mathcal W_j \quad \mbox{in } \omega,
	\end{equation} 
	where, we define $\mathcal Z_j(x)$ for all $x \in \omega$ and $j=1,2$ by
	\begin{equation} \label{zj56}
	\mathcal Z_j (x) :=\ 4 \alpha_j a_{0,j}   \frac{\langle \nabla u_j,\nabla \phi\rangle}{\phi}+ 2\mathcal Q_j (x) a_{1,j}.
	\end{equation}
	
	We will estimate $\left( |D^2 u_j|^2 - 2 \theta a_{0,j} a_{1,j}\right)$ from below in Claim 1 and $\mathcal Z_j $ from above in Claim~2.  These estimates will be used in \eqref{lm98} to obtain \eqref{up8}, from which \eqref{up2} follows immediately.
	
	\vspace{3 mm}
	{\bf Claim 1: } \emph{For all $x \in \omega$ and $j=1,2$, we have that}
	\begin{equation} \label{dd0}
	\begin{aligned}
	|D^2 u_j|^2-2 \theta a_{0,j} a_{1,j}\geq& \ \left[ (1-4 \varepsilon) (a_{0,j}^2+a_{1,j}^2) - \frac{\sigma^2 z_j}{\varepsilon|x|^2} - \frac{(p-2)^2}{4\varepsilon} \frac{|\nabla z_j|^2}{z_j}\right] \theta.
	\end{aligned}
	\end{equation}
	\vspace{3 mm}
	{\em Proof of Claim 1: }  
	Given arbitrary $b_i \in \mathbb{R}$ for $i \in \{ 0,1,2,3\}$, we have
	\begin{equation} \label{in3}
	\left( \sum^3_{i=0} b_i\right)^2 - 2 b_0 b_1 \geq (1-4 \varepsilon)(b_0^2+b_1^2) - \frac{1}{\varepsilon}b_2^2 - \frac{1}{\varepsilon}b_3^2 .
	\end{equation}
	Indeed, \eqref{in3} follows since for $(i,j) \in \{ 0,1\} \times \{ 2,3\}$, Young's inequality with $\varepsilon$ implies
	$$
	2 b_i b_j \geq -2 |b_i| |b_j| \geq - \left( 2 \varepsilon b_i^2 + \frac{1}{2 \varepsilon} b_j^2\right).
	$$
	We observe that 
	\begin{equation} \label{in4}
	a_{2,j}^2(x) \leq \frac{\sigma^2 z_j}{|x|^2} \quad \mbox{and } \  a_{3,j}^2(x) \leq \frac{(p-2)^2}{4} \frac{|\nabla z_j|^2}{z_j} \quad \mbox{for every } x \in \omega.
	\end{equation}
	Since $\theta \in (0,1/N)$, from the inequality $|D^2 u_j|^2 \geq (\Delta u_j)^2/N$ in $\omega$ for $j=1,2$ and \eqref{vee}, we find 
	\begin{equation} \label{in2}
	|D^2 u_j|^2 -2 \theta a_{0,j} a_{1,j} \geq \theta \left[\left( \sum_{i=0}^3 a_{i,j} \right)^2 -2  a_{0,j} a_{1,j} \right].
	\end{equation}
	Taking $b_i = a_{i,j}$ for $i=0,1,2,3$ in \eqref{in3}, then using \eqref{in4} and \eqref{in2}, we reach \eqref{dd0}.
	
	\vspace{3 mm}
	{\bf Claim 2: } \emph{ With $\mathcal Z_j$ defined in  \eqref{zj56}, we have 
		\begin{equation} \label{zbbb}
		\mathcal{Z}_j \leq \widetilde{c}_j \varepsilon a_{1,j}^2 + 6\alpha_j (j-1)(p-1) c' \varepsilon z_j^2 + \left( \frac{\widetilde{c}_j}{4 \varepsilon} + \frac{3
			\alpha_j (j-1)(p-1)c'}{2 \varepsilon} \right)\frac{z_j}{|x|^2 \phi}
		\end{equation}
		for all $x \in \omega$, where $\widetilde{c}_j :=2 |\sigma + \tau| + 3 \alpha_j |m+2-p| c'$.}
	
	\vspace{3 mm}
	{\em Proof of Claim 2: } From \eqref{omegadef} and \eqref{c'}, there exists a constant $c'=c'(N)>0$ such that 
	\begin{equation} \label{ccv} |\nabla \phi(x)| \leq \frac{3c'}{2|x|} (\phi(x))^{\frac{1}{2}}\quad \text{for all } x\in \omega.\end{equation} 
	Thus, we find that
	\begin{equation} \label{qja}
	|\mathcal{Q}_j (x)| \leq \frac{|\sigma + \tau| z_j^{\frac{1}{2}}}{|x|} + \frac{3\alpha_j |m+2-p|c' z_j^{\frac{1}{2}}}{2 |x| \phi^{\frac{1}{2}}} \leq  \frac{\widetilde{c}_j}{2} \frac{z_j^{\frac{1}{2}}}{|x| \phi^{\frac{1}{2}}} \quad \mbox{for every } x \in \omega.
	\end{equation}
	Using Young's inequality for $\chi_j=|a_{1,j}|$ or $\chi_j=z_j$ with $j=1,2$, we get
	\begin{equation} \label{zbim}
	\chi_j \frac{z_j^\frac{1}{2}}{|x| \phi^{\frac{1}{2}}} \leq \varepsilon \chi_j^2 + \frac{z_j}{4 \varepsilon |x|^2 \phi} \quad \mbox{for all } x \in \omega.
	\end{equation}
	Hence, using \eqref{ccv}, \eqref{qja} and \eqref{zbim}, we find for every $x \in \omega$
	\begin{equation} \label{ye9}
	\left\{
	\begin{aligned}
	&4 \alpha_j a_{0,j }\frac{\left| \langle \nabla u_j,\nabla \phi\rangle \right|}{\phi} \leq 3(j-1)(p-1) \alpha_j c'\left(2 \varepsilon   z_j^2 + \frac{1}{2 \varepsilon} \frac{z_j}{|x|^2\phi} \right), \\
	&2 | \mathcal{Q}_j (x)a_{1,j}(x)| \leq\widetilde{c}_j |a_{1,j}(x)|\frac{z_j^{\frac{1}{2}}}{\phi^{\frac{1}{2}}|x|} \leq \widetilde{c}_j \left( \varepsilon a_{1,j}^2(x) + \frac{z_j}{4 \varepsilon|x|^2 \phi}\right).
	\end{aligned}
	\right.
	\end{equation}
	Thus, we obtain \eqref{zbbb} from \eqref{ye9} as claimed.
	
	\vspace{3 mm}
	{\em Proof of Lemma~\ref{lm11} completed:}  
	By Claims 1 and 2, using \eqref{dd0} and \eqref{zbbb} in \eqref{lm98}, we get 
	\begin{equation} \label{up8}
	\mathcal{L}_j[w_j]\leq \left[-d_0(j-1)  z_j^2+d_{1,j} \frac{z_j}{\phi |x|^2}-d_{2,j}  a_{1,j}^2(x)-d_{3,j} \frac{|\nabla z_j|^2}{z_j} - 2\mathcal{W}_j(x) \right] \phi^{2 \alpha_j},
	\end{equation}
	where the constants $d_{1,j},d_{2,j},d_{3,j}$ and $d_0$ are given by
	\begin{equation} \label{dc1}
	\left\{
	\begin{aligned}
	d_{1,j}&= \beta_{1,j} + \frac{1}{\varepsilon} \left( 2 \theta \sigma^2 +\frac{\widetilde{c}_j}{4}+ \frac{3 \alpha_j (j-1)(p-1)c'}{2}\right), \quad d_{2,j}= 2 \theta - (8\theta+ \widetilde{c}_j)\varepsilon, \\
	d_{3,j} &= \beta_{2,j} - \frac{\theta(p-2)^2}{2 \varepsilon}, \quad d_0= (p-1)\left[ 2 \theta (p-1) - (3 c' + 8 \theta (p-1))\varepsilon \right].\\
	\end{aligned}
	\right.
	\end{equation}
	Now, we fix $\varepsilon>0$ depending only on $m,N,p,q,\sigma,\tau$ small enough such that 
	\begin{equation} \label{edeff}
	d_0>0, \quad 1-3 \alpha_j c' \varepsilon>0, \quad \mbox{and } d_{2,j}>0 \quad \mbox{for } j=1,2.
	\end{equation}
	Then we choose $M>3|p-2|/2$ depending only on $m,N,p,q,\sigma,\tau$ such that 
	\begin{equation} \label{mdeff}
	d_{3,j}>0, \quad \mbox{where $d_{3,j}$ is given by (\ref{dc1}) for $j=1,2$}.
	\end{equation}
	By taking $d_1=\displaystyle\max_{j=1,2} d_{1,j}$ and $d_i=\displaystyle\min_{j=1,2} d_{i,j}$ for $i=2,3$, we conclude \eqref{up2} from \eqref{up8}. 
\end{proof}

\begin{remark} \label{remark2} {\rm In Lemma~\ref{lm11}, we have $\mathcal{W}_j(x) \geq 0$ for every $x \in \omega$ and for $j=1,2$. Indeed, when $j=1$, we recall that $a_{0,j}(x)=0$ and $h_j'(u_j) \geq 0$ for every $x \in \omega$.  When $j=2$, we use that $h_2'(t)=-k h_2(t) \geq 0$ and our choice of $\theta$ ensures that
	\begin{equation} \label{ye8}
	h_2'(u_2)+2 \theta (p-1)h_2(u_2) = \left( 2 \theta (p-1)-k\right)h_2(u_2)  > 0.
	\end{equation}	}
\end{remark}

\section*{Acknowledgement}

The authors would like to thank the referee for a careful reading of the manuscript, useful suggestions and drawing our attention to a few 
typographical errors.

\end{document}